\DeclareFontFamily{U}{mathb}{\hyphenchar\font45}
\DeclareFontShape{U}{mathb}{m}{n}{
      <5> <6> <7> <8> <9> <10>
      <10.95> <12> <14.4> <17.28> <20.74> <24.88>
      mathb10
      }{}
\DeclareSymbolFont{mathb}{U}{mathb}{m}{n}
\DeclareMathSymbol{\sqbullet}{1}{mathb}{"0D}
\pgfplotsset{compat=1.17}
\pgfplotsset{soldot/.style={color=black,only marks,mark=*}} \pgfplotsset{holdot/.style={color=black,fill=white,only marks,mark=*}}
\newtheorem{thm}{Theorem}[section]
\newtheorem{lem}[thm]{Lemma}
\newtheorem{prop}[thm]{Proposition}
\newtheorem{cor}[thm]{Corollary}
\newtheorem{prob}[thm]{Problem}
\theoremstyle{definition}
\newtheorem{defn}[thm]{Definition}
\theoremstyle{remark}
\newtheorem{remark}[thm]{Remark}
\newtheorem{example}[thm]{Example}
\newtheorem{examples}[thm]{Examples}
\numberwithin{equation}{section}
\numberwithin{figure}{section}
\newcommand{\Z}{{\mathbb Z}} \newcommand{\R}{{\mathbb R}}
\newcommand{\sph}{{\mathbb S}}
 \newcommand{\Cont}{{\mathcal C}}
\newcommand{\Ff}{{\EuScript F}}
\newcommand{\Tt}{{\EuScript T}}
\newcommand{\Ee}{{\EuScript E}}
\newcommand{\pol}{{\EuScript K}}
\newcommand{\Qq}{{\EuScript Q}}
\newcommand{\Ll}{{\EuScript L}}
\newcommand{\Int}{\operatorname{Int}}
\newcommand{\cl}{\operatorname{Cl}}
\newcommand{\dist}{\operatorname{dist}}
\newcommand{\id}{\operatorname{id}}
\newcommand{\sd}{\operatorname{sd}}
\newcommand{\diam}{\operatorname{diam}}
\newcommand{\Conv}{\operatorname{Conv}}
\newcommand{\st}{\operatorname{St}}
\newcommand{\x}{{\tt x}} \newcommand{\y}{{\tt y}} 
\newcommand{\z}{{\tt z}} 
\newcommand{\s}{{\tt s}}
\newcommand{\veps}{\varepsilon}
\newcommand{\eps}{\epsilon}
\newcommand{\ol}{\overline}
\newcommand\ov[1]{\overset{\circ}{#1}}
\begin{document}

\title[Differentiable approximation of continuous definable maps]{Differentiable approximation of continuous definable maps that preserves the image}

\author{Antonio Carbone}
\address{Dipartimento di Matematica, Via Sommarive, 14, Universit\`a di Trento, 38123 Povo (ITALY)}
\email{antonio.carbone@unitn.it}

\date{4/6/2024}
\subjclass[2010]{Primary: 14P10, 57Q55; Secondary: 03C64, 32B20, 57R35}
\keywords{O-minimal structures, approximation of semialgebraic maps, approximation of maps between polyhedra, differentiable approximation}

\begin{abstract}
Recently Paw\l{}ucki showed that compact sets that are definable in some o-minimal structure admit triangulations of class $\Cont^p$ for each integer $p\geq 1$. In this work, we make use of these new techniques of triangulation to show that all continuous definable maps between compact definable sets can be approximated by differentiable maps without changing their image. The argument is an interplay between o-minimal geometry and PL geometry and makes use of a `surjective definable version' of the finite simplicial approximation theorem that we prove here.
\end{abstract}

\maketitle

\section{Introduction}

Approximation of continuous maps is an important tool in many areas of mathematics. In particular, in geometry the possibility of approximating continuous maps by a dense subclass of maps with a better behaviour often allows a deeper and better understanding of many properties. A famous example is the classical Weierstrass approximation theorem: \textit{Every continuous map $f:X\to\R^m$, defined on a compact subset $X$ of $\R^n$, can be uniformly approximated on $X$ by polynomial maps.} Here, the approximating maps take values in the Euclidean space $\R^m$, but several difficulties arise when one tries to restrict the image of the approximating map to a fixed target space $Y\subset\R^m$. For instance, there exist no nonconstant polynomial maps from the sphere $\sph^2$ to the circle $\sph^1$ \cite[Thm.13.1.9]{bcr}. 

A deep problem in real algebraic geometry regards the possibility of approximating continuous (or smooth) maps between nonsingular real algebraic sets by regular maps. Besides the Weierstrass approximation theorem quoted above, where the target space $Y$ is an Euclidean space, determining when a continuous map between real algebraic sets can be approximated by a regular map is a hard problem. For instance, except for the cases $m=1,2,4$ studied mainly by Bochnak and Kucharz \cite{ 1,2, 3}, it is not known whether a continuous map between a sphere $\sph^n$ of dimension $n$ and a sphere $\sph^m$ of dimension $m$ could be approximated by a regular map or not. We refer the reader to \cite[\S 12, \S13.3]{bcr} and the survey \cite{bk4} for further informations about this fascinating problem.  It is worthwhile to remark that compact real algebraic sets with at least two points do not have tubular neighbourhoods with regular retractions \cite[Thm.2]{gh}. If the target space $Y$ is `generic' in a suitable sense, then regular maps from a compact nonsingular real algebraic set $X$ to $Y$ are `very few', see \cite{gh2,gh3}. This lack of regular maps seems to be one of the main difficulties for an extension of the Nash-Tognoli algebraization techniques \cite{t} from smooth manifolds to singular polyhedra, see \cite{AKi, ght}. 

To overcome these difficulties one can consider, instead of polynomials or regular maps, a more flexible class of approximating maps like differentiable maps. Let $X\subset\R^n$ and $Y\subset\R^m$ be sets and $p\geq 1$ a positive integer. A map $f:X\to Y$ is \textit{of class $\Cont^p$} if it admits an extension $\widehat{f}:\Omega\to\R^m$, defined on an open neighbourhood $\Omega$ of $X$ in $\R^n$, which is of class $\Cont^p$ in the usual sense. A classical example of differentiable approximation concerns Whitney's approximation theorem \cite{w} for continuous maps whose target space is a $\Cont^p$ submanifold of $\R^m$. Here, a crucial point is the existence of $\Cont^p$ tubular neighbourhoods for $\Cont^p$ submanifolds in $\R^m$. When the target space $Y$ has `singularities', the lack of existence of tubular neighbourhoods (of class $\Cont^p$ for $p\geq 1$), makes the approximation problem difficult to approach in its full generality. One possible way to proceed is to consider domains of definitions and target spaces in some suitable tame category. For instance, Fernando and Ghiloni \cite{fgh2} conducted an extended study on differentiable approximation for continuous maps when the target space $Y$ admits suitable triangulations. They showed that differentiable approximation is possible for a wide class of triangulable sets including differentiable manifolds, polyhedra, semialgebraic sets, subanalytic sets and more generally definable sets of an o-minimal structure. 

In this work we focus on differentiable approximation for continuous maps that are definable in an o-minimal structure. An \textit{o-minmal structure} (on the ordered field of real numbers $\R$) is a collection $\mathfrak{S}:=\{\mathfrak{S}\}_{n\in\mathbb{N}^*}$ of families of subsets of $\R^n$ satisfying the following properties:
\begin{itemize}
\item $\mathfrak{S}_n$ is a Boolean algebra,
\item $\mathfrak{S}_n$ contains the algebraic subsets of $\R^n$,
\item if $X\in \mathfrak{S}_n$ and $Y\in\mathfrak{S}_m$, then $X\times Y\in\mathfrak{S}_{n+m}$,
\item if $\pi:\R^n\times\R\to\R^n$ is the projection onto the first factor and $X\in\mathfrak{S}_{n+1}$, then $\pi(X)\in\mathfrak{S}_n$,
\item $\mathfrak{S}_1$ consists exactly of all the finite unions of points and intervals (of any type). 
\end{itemize}
The elements of $\mathfrak{S}_n$ are called \textit{definable subsets of} $\R^n$. A map $f:X\to Y$, between a definable subset $X\subset\R^n$ and a definable subset $Y\subset \R^m$, is a \textit{definable map} if its graph $\Gamma_f$ is a definable subset of $\R^{n+m}$. As a consequence of the Tarski-Seidenberg theorem \cite[Thm.2.2.1]{bcr}, semialgebraic sets constitute an o-minimal structure, which is the `smallest' o-minimal structure, in the sense that it is contained in any other o-minimal structure. In particular, semialgebraic sets and maps are definable in any o-minimal structure. The collection of global subanalytic sets is precisely the collection of definable sets in the o-minimal structure $\R_{\text{an}}$ (see \cite{wi}). We refer the reader to \cite{Co, vD, vDM} for further information on the theory of o-minimal structures. For the rest of this article, even if not explicitly mentioned, when we refer to definable sets or definable maps we mean definable in a fixed o-minimal structure. 

Let $X\subset\R^n$ be a compact set and $Y\subset\R^m$ any set. Given a continuous map $f:X\to Y$ we denote by $\|f\|$ the uniform norm of $f$, that is 
$$
\|f\|:=\max_{x\in X}\{|f(x)|_m : x\in X\},
$$
where $|\cdot|_m$ denotes the Euclidean norm of $\R^m$. We denote by $\Cont^0(X,Y)$ the set of all continuous maps from $X$ to $Y$ endowed with the compact-open topology. A fundamental system of open neighbourhoods of $f\in\Cont^0(X,Y)$ is given by the sets 
$$
\mathcal{N}(f,\veps):=\{g\in\Cont^0(X,Y) : \|f-g\|<\veps\},
$$
where $\veps>0$ is any strictly positive number. Let $p\geq 1$ be an integer and assume that the sets $X\subset\R^n$ and $Y\subset\R^m$ are definable. A definable map $f:X\to Y$ is \textit{of class $\Cont^p$} if it admits a definable extension $\widehat{f}:\Omega\to\R^m$ of class $\Cont^p$ defined on an open definable neighbourhood $\Omega$ of $X$ in $\R^n$. 

In \cite{fgh2} the authors investigated differential approximation for continuous maps in a very general context and their results do not take into account whether the involved continuous map $f$ is definable or not. Therefore, it is not ensured that the approximating map is definable in case that the map $f$ is definable. When one deals with maps that are definable in an o-minimal structure it is natural to wonder if it is possible to preserve this additional structure after the approximation. The general problem is the following:

\begin{prob}[Definable approximation problem]\label{approxprob2}
Let $p\geq 1$ be an integer. Determine which definable subsets $Y\subset\R^m$ have the following property: For each $\veps>0$, each compact definable subset $X$ of each Euclidean space $\R^n$ and each continuous definable map $f:X\to Y$, there exists a definable map $g:X\to Y$ of class $\Cont^p$ such that $\|f-g\|<\veps$.
\end{prob}

In \cite{fgh1} the authors focused on the semialgebraic case. Even if not explicitly mentioned in the paper, the results of \cite{fgh1} hold true (with the obvious modifications to the proofs) in every o-minimal structure. In particular, as a consequence of the existence of $\Cont^1$-triangulations for definable sets \cite{os}, they showed that Problem \ref{approxprob2} has a positive answer for each definable set $Y$ in the case $p=1$, see \cite[Thm.1.3]{fgh1}. Recently, Paw\l{}ucki \cite{p1,p2} improved the results of \cite{os}, showing that definable sets admit triangulations of class $\Cont^p$ for each $p\geq 1$, see \S\ref{pawu}. As an application of his remarkable results and \cite[Cor.1.5]{fgh1}, he provided a complete answer to Problem \ref{approxprob2} for every integer $p\geq 1$, see also the proof of \cite[Thm.1.4]{fgh1}.

\begin{thm}[{\cite[Thm.9.2]{p1}}]\label{approxPaw1}
Let $X$ be a compact definable subset of $\R^n$, $Y$ a definable subset of $\R^m$ and $f:X\to Y$ a continuous definable map. Let $\veps>0$ and let $p\geq 1$ be an integer. Then, there exists a definable map $g:X\to Y$ of class $\Cont^p$ such that $\|f-g\|<\veps$ and $g(X)\subset f(X)$. 
\end{thm}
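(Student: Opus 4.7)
The plan is to reduce Theorem~\ref{approxPaw1} to a direct combination of two ingredients: Paw\l{}ucki's $\Cont^p$-triangulation theorem for compact definable sets (cited from \cite{p1,p2} and recalled in \S\ref{pawu}), and the conditional approximation statement \cite[Cor.1.5]{fgh1}, which asserts that if the target space admits a $\Cont^p$-triangulation, then continuous definable maps into it from compact definable sources can be definably $\Cont^p$-approximated. The image-preservation constraint $g(X)\subset f(X)$ will be handled by a preliminary reduction of the target, so that it does not interact at all with the approximation machinery.

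First, I would absorb the image constraint into the target. Set $Z:=f(X)$; since $X$ is compact definable and $f$ is continuous definable, $Z$ is a compact definable subset of $Y$, and $f$ factors as a continuous definable surjection $f\colon X\to Z$. Any map $g\colon X\to Z$ automatically satisfies $g(X)\subset Z=f(X)\subset Y$, so it suffices to produce a definable $\Cont^p$-map $g\colon X\to Z$ with $\|f-g\|<\veps$, where the norm is computed in $\R^m$ and is unaffected by the target reduction.

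Next, I would invoke Paw\l{}ucki's $\Cont^p$-triangulation theorem applied to the compact definable set $Z$. This places $Z$ in the class of definable target spaces admitting a $\Cont^p$-triangulation, to which \cite[Cor.1.5]{fgh1} (formulated there in the semialgebraic setting but valid in any o-minimal structure with the obvious modifications, exactly as noted in the excerpt) applies verbatim. Applying that corollary to the continuous definable map $f\colon X\to Z$ yields a definable map $g\colon X\to Z$ of class $\Cont^p$ with $\|f-g\|<\veps$; viewed as a map into $Y$, it satisfies $g(X)\subset f(X)$ as required.

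The main obstacle is entirely absorbed by the $\Cont^p$-triangulation theorem of Paw\l{}ucki, which is a deep technical advance over the $\Cont^1$-case of \cite{os}; once this is granted, the proof is a clean assembly. The one subtle point worth flagging is that Problem~\ref{approxprob2} as stated in the excerpt is a property of the target space $Y$ alone (allowing arbitrary compact definable sources), so one really does need \cite[Cor.1.5]{fgh1} in its general ``target with $\Cont^p$-triangulation'' formulation rather than just its specialisation to triangulated $Y$'s appearing in the proof of \cite[Thm.1.3]{fgh1}; this is exactly the step where the extension from $p=1$ to general $p\geq 1$ is crystallised.
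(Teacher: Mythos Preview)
Your proposal is correct and follows exactly the approach the paper indicates: the paper does not supply its own detailed proof of this theorem (it is cited as \cite[Thm.9.2]{p1}), but explicitly attributes it to the combination of Paw\l{}ucki's $\Cont^p$-triangulation results with \cite[Cor.1.5]{fgh1}, which is precisely your argument. Your reduction of the target to $Z=f(X)$ to absorb the image constraint $g(X)\subset f(X)$ is the natural move and is implicit in the paper's discussion.
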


For a general target space $Y$, the previous theorem does not hold true for $p=\infty$. Already in the semialgebraic case, the problem of determining for which semialgebraic sets $Y\subset\R^m$ it is possible to approximate a continuous semialgebraic map with values in $Y$ by a Nash map (i.e. a $\Cont^{\infty}$ semialgebraic map) is a very difficult task. Shiota \cite{sh} showed that continuous semialgebraic maps can be approximated by Nash maps when the target space $Y$ is an affine Nash manifold (i.e. a semialgebraic subset of some $\R^m$ that is also a smooth submanifold). As a consequence of Theorem \ref{approxPaw1} and the results of Baro, Fernando and Ruiz \cite[Thm.1.7]{bfr}, it holds that a continuous semialgebraic map between compact Nash sets with only normal crossings (and more generally monomial) singularities and which preserves the irreducible components can be approximated by a Nash map. We refer the reader to \cite[\S1]{bfr} for the precise definitions and further information. Recently, we showed \cite[Thm.1.12]{cf} that Nash approximation for continuous semialgebraic maps is possible when the target space is an affine Nash manifold with (divisorial) corners. An \textit{affine Nash manifold with corners} is a semialgebraic subset of some $\R^m$ that is also a submanifold with corners (see \cite{jo} for further information about manifolds with corners). Besides these cases, very little is known about Nash approximation of continuous semialgebraic maps. The main difficulty lies in the fact that Nash maps are analytic and of algebraic nature \cite[Prop.8.1.8]{bcr}, so one cannot use the standard tools in approximation theory, typical of the smooth category, like partitions of unity or integration of vector fields. The following example shows that, in general, a continuous semialgebraic map cannot be approximated by Nash maps:

\begin{example}
Let $X:=[0,1]$ and $Y:=\{\x\y=0\}\subset\R^2$. Consider the continuous semialgebraic map $f:X\to Y$ defined as follows:
$$
f(x):=
\begin{cases}
\big(\tfrac{1}{2}-x,0\big), \text{ if } 0\leq x\leq \tfrac{1}{2},\\
\big(0,x-\tfrac{1}{2}\big), \text{ if } \tfrac{1}{2}< x\leq 1.
\end{cases}
$$
Let $g:X\to Y$ be any Nash map. As Nash maps are analytic, then either $g(X)\subset\{\x=0\}$ or $g(X)\subset \{\y=0\}$. We deduce, $\|f-g\|\geq \tfrac{1}{2}$. Thus, $f$ cannot be approximated by a Nash map. $\sqbullet$
\end{example}

Theorem \ref{approxPaw1}, as well as the mentioned results of Fernando and Ghiloni, only takes into account the target space $Y$ during the approximation process and do not guarantee that $g(X)=f(X)$. The purpose of this paper is to improve Theorem \ref{approxPaw1} in order to have an approximating map $g$ that not only has $Y$ as target space, but that has also the same image of $f$. 

Our result reads as follows.

\begin{thm}\label{main}
Let $f:X\to \R^m$ be a continuous definable map defined on a compact definable set $X\subset\R^n$. Let $\veps>0$ and let $p\geq 1$ be an integer. Then, there exists a definable map $g:X\to \R^m$ of class $\Cont^p$ such that $\|f-g\|<\veps$ and $g(X)=f(X)$. 
\end{thm}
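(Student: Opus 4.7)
The strategy I propose builds on Theorem~\ref{approxPaw1} (which provides $g$ with $g(X) \subseteq f(X)$) by taking a detour through PL geometry that lets us upgrade the inclusion to equality. Set $Y := f(X)$, a compact definable subset of $\R^m$. By the $\Cont^p$-triangulation results of Paw\l{}ucki (\S\ref{pawu}), I would select compatible definable $\Cont^p$-triangulations $\Phi\colon |K|\to X$ and $\Psi\colon |L|\to Y$ respecting the relevant subsets; the composition $F := \Psi^{-1}\circ f\circ\Phi\colon |K|\to|L|$ is then a continuous definable surjection between polyhedra, and on each open simplex of $|K|$ the map $f$ is a $\Cont^p$-diffeomorphism conjugate to a continuous map between open simplices.

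The combinatorial heart of the argument is what I will call a \emph{surjective definable simplicial approximation theorem}: for every continuous definable surjection $F\colon|K|\to|L|$ between finite simplicial complexes and every $\delta>0$ there exist subdivisions $K', L'$ and a simplicial (hence PL definable) map $\phi\colon K'\to L'$ with $\|F-\phi\|<\delta$ and $\phi(|K'|)=|L'|$. The standard simplicial approximation theorem supplies the inequality but in general only $\phi(|K'|)\subseteq |L'|$. To force the equality I would proceed by induction on $\dim L$: starting from a star-condition simplicial approximation $\phi_0$, for each simplex $\tau\in L$ not entirely covered by $\phi_0$ one exploits $F^{-1}(\mathrm{int}\,\tau)\neq\varnothing$ to pick a small simplex $\sigma$ of a sufficiently fine subdivision of $K$ with $F(\sigma)\subset \mathrm{int}\,\tau$, and replaces $\phi_0$ on $\sigma$ by an affine surjection $\sigma\to\tau$. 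A size/separation argument shows these local modifications can be made independently on disjoint simplices without destroying the $\delta$-approximation or the coverage of lower-dimensional strata gained inductively.

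Transferring $\phi$ back yields a continuous definable map $f_0 := \Psi\circ\phi\circ\Phi^{-1}\colon X\to\R^m$ satisfying $\|f-f_0\|<\veps/2$, $f_0(X)=f(X)$, and the structural property that on each open simplex $\Phi(\sigma)$ of $X$, $f_0$ is $\Cont^p$ and surjects onto an open simplex $\Psi(\tau)$ of $Y$. To finish I would run Paw\l{}ucki's smoothing procedure underlying Theorem~\ref{approxPaw1} applied to $f_0$, producing a globally $\Cont^p$ definable $g$ with $\|f_0-g\|<\veps/2$ and $g(X)\subseteq f_0(X)=Y$. The new ingredient to verify is that this smoothing, applied to a map with the piecewise structure of $f_0$, can be arranged to leave the image unchanged: the smoothing perturbs $f_0$ only in a shrinking neighborhood of the skeleta, so the interior of each top-dimensional simplex of $Y$ remains covered by the corresponding interior piece of $f_0$, while lower-dimensional simplices of $Y$ are covered, by induction on dimension, by the images of lower-dimensional strata of $X$ that are themselves smoothed compatibly.

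The principal obstacle is exactly the passage from ``image contained in $Y$'' to ``image equal to $f(X)$''. This is where the PL reduction becomes essential: it is the surjective simplicial approximation theorem that allows us to localize surjectivity to a simplex-by-simplex statement that cooperates with the $\Cont^p$-smoothing. The two substantive pieces of new work are therefore, first, the proof of that surjective version of simplicial approximation (handling each omitted simplex of $L$ by a targeted local modification), and second, a refinement of Paw\l{}ucki's smoothing estimates showing that when applied to the particularly structured map $f_0$ the image is preserved throughout.
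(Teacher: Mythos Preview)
Your surjective simplicial approximation step is essentially the paper's Proposition~\ref{key3}, so that half of the plan is sound. The genuine gap is the second half. You propose to apply Paw\l{}ucki's smoothing (Theorem~\ref{approxPaw1}) to the structured map $f_0$ and then argue that surjectivity survives because ``the smoothing perturbs $f_0$ only in a shrinking neighborhood of the skeleta.'' But Theorem~\ref{approxPaw1} gives you exactly $g(X)\subset f_0(X)$, nothing more; turning that inclusion into equality is the entire content of the theorem you are trying to prove, and you have not supplied a mechanism. Your heuristic would require reopening Paw\l{}ucki's construction and proving quantitative control on \emph{where} the perturbation occurs and \emph{how much} image is lost on each simplex---effectively a new, strengthened version of his theorem. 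The inductive claim for lower-dimensional simplices has the same defect: nothing in Theorem~\ref{approxPaw1} says the smoothed map respects the stratification of $Y$.

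The paper closes this gap by a different device that avoids any refinement of Paw\l{}ucki's estimates. It introduces an $\eps$-\emph{squeezing map} $\pi_\eps\colon|\Ll|\to|\Ll|$ (Definition~\ref{defsqueezing}) and proves, via a Brouwer/homology argument (Lemma~\ref{prepkey}, Proposition~\ref{key}), that for a surjective simplicial map $h$ there is $\eps>0$ such that $\pi_\eps\circ g$ is surjective for \emph{every} continuous $g$ with $\|h-g\|<\eps$. One then applies Paw\l{}ucki's \emph{desingularization} (Theorem~\ref{pawudesing}), which is a pre-composition by a $\Cont^p$ homeomorphism $\Phi$ close to the identity, to the fixed map $\pi_\eps\circ H\circ\varphi^{-1}$: the result is automatically $\Cont^p$, and since $\Phi$ only moves points by less than $\eps$, Proposition~\ref{key} guarantees surjectivity of the composition. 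In short, the paper makes surjectivity \emph{stable under small perturbation} by post-composing with $\pi_\eps$, and then lets Paw\l{}ucki's reparametrization be that small perturbation; your plan instead asks Paw\l{}ucki's procedure itself to preserve surjectivity, which is not something it is known to do.
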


The proof of this theorem is an interplay between o-minimal geometry and PL geometry and makes use of the following `surjective definable version' of the finite simplicial approximation theorem, which seems interesting in its own right. In general, the classical finite approximation theorem \cite[Thm.16.1]{mu} does not guarantee that a simplicial approximation $h$ is surjective if the involved continuous map $f$ is surjective, see Examples \ref{notsurj} below. If $\pol$ is a finite simplicial complex of $\R^n$, we indicate with $\sd^\kappa(\pol)$ its $\kappa^{\text{th}}$ barycentric subdivision, see \S\ref{finitsim}.

\begin{prop}[Surjective simplicial approximation]\label{key3}
Let $\pol$ be a finite simplicial complex of $\R^n$, $\Ll$ a finite simplicial complex of $\R^m$ and $f:|\pol|\to|\Ll|$ a surjective continuous definable map. Then, for each $\veps>0$ there exist two integers $\kappa,\ell\geq 0$ and a surjective simplicial map $h:|\sd^{\kappa}(\pol)|=|\pol|\to|\sd^{\ell}(\Ll)|$ such that $\|f-h\|<\veps$.
\end{prop}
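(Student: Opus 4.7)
The plan is to refine the classical finite simplicial approximation construction by \emph{anchoring} the vertex assignment at preimages of interior points of every target simplex, forcing surjectivity by construction.

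First I choose $\ell$ so that the mesh of $\sd^{\ell}(\Ll)$ is less than $\veps$; any simplicial map $h:\sd^\kappa(\pol)\to\sd^\ell(\Ll)$ whose vertex assignment satisfies the star condition $f(\st(w))\subset\st(h(w))$ then automatically satisfies $\|f-h\|<\veps$. Next, for each simplex $\tau$ of $\sd^{\ell}(\Ll)$, I use the dimension inequality $\dim f^{-1}(\tau^{\circ})\geq\dim\tau$ for the definable surjection $f$, together with the partition of $f^{-1}(\tau^\circ)$ as the disjoint union of the sets $\sigma^\circ\cap f^{-1}(\tau^\circ)$ over simplices $\sigma$ of $\pol$, to find a simplex $\sigma_\tau$ of $\pol$ with $\dim\sigma_\tau\geq\dim\tau$ such that $\sigma_\tau^{\circ}\cap f^{-1}(\tau^{\circ})$ has dimension $\geq\dim\tau$. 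I pick $y_\tau$ in this intersection in general position, i.e.\ outside the (countable) union over $\kappa$ of the proper sub-faces of $\sd^\kappa(\pol)$.

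Now I choose $\kappa$ large enough that (i) the mesh of $\sd^{\kappa}(\pol)$ is smaller than a Lebesgue number for the open cover $\{f^{-1}(\st(v))\}_v$ of $|\pol|$; (ii) for every $\tau$, the simplex $\sigma'_\tau$ of $\sd^{\kappa}(\pol)$ whose relative interior contains $y_\tau$ lies inside $\sigma_\tau^{\circ}$, and the open stars of its vertices are contained in $f^{-1}(\tau^{\circ})$; (iii) the anchors $\sigma'_\tau$ for different $\tau$ are pairwise vertex-disjoint. Condition~(i) ensures the star condition can be satisfied at every vertex; condition~(ii), achievable because $f^{-1}(\tau^\circ)$ is an open neighborhood of $y_\tau$ and the mesh tends to $0$, implies $f(\st(w))\subset\tau^{\circ}\subset\st(v)$ for every vertex $v$ of $\tau$; together with general position this yields $\dim\sigma'_\tau=\dim\sigma_\tau\geq\dim\tau$.

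I then define $h$ on vertices as follows: for each $\tau$ I send $V(\sigma'_\tau)$ surjectively onto $V(\tau)$, possible since $|V(\sigma'_\tau)|\geq|V(\tau)|$; on remaining vertices $w$ I assign $h(w)$ to be any $v$ with $f(\st(w))\subset\st(v)$. The star condition at every vertex gives the vertex-scheme condition, so $h$ extends to a simplicial map $\sd^{\kappa}(\pol)\to\sd^{\ell}(\Ll)$ with $\|f-h\|<\veps$. For each simplex $\tau$, a face of $\sigma'_\tau$ spanned by a lift of $V(\tau)$ maps affinely onto $\tau$, making $h$ surjective.

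The main obstacle is condition~(ii): barycentric subdivision introduces new vertices on every face, so the vertices of $\sigma'_\tau$ need not all lie inside $\sigma_\tau^{\circ}$, and their open stars could spill outside $f^{-1}(\tau^{\circ})$. This is resolved by the general-position choice of $y_\tau$ combined with a distance-to-boundary estimate: for $\kappa$ so large that the mesh is less than $\dist(y_\tau,\partial\sigma_\tau)$, the entire closed simplex $\sigma'_\tau$ lies inside $\sigma_\tau^{\circ}$, hence the open stars of its vertices lie in $\sigma_\tau^{\circ}$ as well; an additional enlargement of $\kappa$ then shrinks those stars into the open neighborhood of $y_\tau$ contained in $f^{-1}(\tau^{\circ})$, and simultaneously achieves the vertex-disjointness of the finitely many anchors.
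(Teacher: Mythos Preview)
Your argument is essentially correct, but two points need tightening. First, anchor only at \emph{maximal} simplices $\tau$ of $\sd^{\ell}(\Ll)$: for non-maximal $\tau$ the set $f^{-1}(\ov{\tau})$ is not open in $|\pol|$, so your condition~(ii) (``$f^{-1}(\ov{\tau})$ is an open neighbourhood of $y_\tau$'') fails. Anchoring only at maximal $\tau$ suffices, since the image of a simplicial map is a subcomplex. Second, your general-position requirement ``outside the proper sub-faces of $\sd^\kappa(\pol)$'' is vacuously impossible when $\sigma_\tau$ is not maximal in $\pol$, because then all of $\sigma_\tau$ lies in such faces. The fix is to take $\sigma_\tau$ maximal as well: once $\tau$ is maximal, $f^{-1}(\ov{\tau})$ is open, and any $\sigma$ with $\ov{\sigma}\cap f^{-1}(\ov{\tau})\neq\varnothing$ is dominated by a maximal simplex with the same property. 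With $\sigma_\tau$ maximal your closing paragraph becomes literally correct (the open stars of vertices of $\sigma'_\tau$ do lie in $\ov{\sigma}_\tau$, since $\ov{\sigma}_\tau$ is now open in $|\pol|$).

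The overall strategy---locate, for every maximal $\tau$, a small anchor simplex in a fine subdivision of the source that is forced to map onto $\tau$---is the same as the paper's, but the executions diverge. The paper first takes an \emph{arbitrary} simplicial approximation $h_0$, then introduces a combinatorial refinement operator $h\mapsto h^*$ (defined via a linear order on $\Ll_\bullet$) which pushes $h_0$ to the second barycentric subdivision while preserving $h_0(\sigma)$ for every $\sigma\in\pol$; only then does it overwrite $h^*$ on one explicit simplex $\sigma'_k\subset\ov{\sigma}_k$ having the barycentre of $\sigma_k$ as a vertex. Because the overwritten vertices may violate the ordinary star condition, the paper only obtains the weaker second-star inclusion $f(\st(\nu))\subset\st^{(2)}(h(\nu))$ and hence needs $\diam(\tau)<\veps/3$. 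Your direct construction, by contrast, places the anchors so that the \emph{standard} star condition $f(\st(w))\subset\ov{\tau}\subset\st(h(w))$ holds at every anchor vertex automatically; thus mesh $<\veps$ suffices and the simplicial-map property follows at once. Your route is shorter and avoids the $h^*$ machinery, at the price of a mild Baire-type general-position step; the paper's route is entirely explicit and combinatorial.
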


\section{Preliminaries}

In this section we present some preliminary definitions and results that will be freely used throughout this article.

\subsection{Finite simplicial complexes}\label{finitsim}

A \textit{simplex} $\sigma\subset\R^n$ of dimension $d$ is the convex hull of $d+1$ affinely independent points $\nu_0,\ldots,\nu_d\in\R^n$, that is
$$
\sigma=\Conv(\{\nu_0,\ldots,\nu_d\}):=\Big\{\lambda_0\nu_0+\ldots+\lambda_d\nu_d :  \lambda_0\geq 0,\ldots,\lambda_d\geq 0, \sum_{i=0}^d\lambda_i=1\Big\}.
$$
If $0\leq i_0<\ldots<i_k\leq d$, the simplex $\Conv(\{\nu_{i_0},\ldots,\nu_{i_k}\})$ is called a \textit{face of} $\sigma$ of dimension $k$. As usual, a face of dimension $d-1$ is called a \textit{facet of} $\sigma$ while a face of dimension zero is called a \textit{vertex of} $\sigma$. We denote $\partial\sigma$ the \textit{(relative) boundary of} $\sigma$ defined as the union of the proper faces of $\sigma$ and $\ov{\sigma}:=\sigma\setminus\partial\sigma$ the \textit{(relative) interior of} $\sigma$, which is equal to the interior of $\sigma$ in the affine space generated by $\sigma$ in $\R^n$ (that is, the smallest affine subspace of $\R^n$ that contains $\sigma$). Observe that if $\sigma$ is a simplex of dimension zero, then $\partial \sigma=\varnothing$, so $\ov{\sigma}=\sigma$.

A \textit{finite simplicial complex $\pol$ of $\R^n$} is a finite family of simplices of $\R^n$ such that 
\begin{itemize}
\item for each simplex $\sigma\in\pol$ all the faces of $\sigma$ belong to $\pol$,
\item for each $\sigma_1,\sigma_2\in\pol$ the intersection $\sigma_1\cap\sigma_2$ is either empty or a common face of both $\sigma_1$ and $\sigma_2$.
\end{itemize}
The \textit{(underlying) polyhedron $|\pol|$ of} $\pol$ is the set $\bigcup_{\sigma\in\pol}\sigma$ equipped with the topology induced by the Euclidean topology of $\R^n$. Observe that $|\pol|$ is always a compact subset of $\R^n$. A simplex $\sigma\in\pol$ is a \textit{maximal simplex of} $\pol$ if it is not a proper face of another simplex $\sigma'\in\pol$. Equivalently, $\sigma\in\pol$ is a maximal simplex of $\pol$ if $\sigma$ satisfies the following property: for each $\sigma'\in\pol$, it holds $\ov{\sigma}\cap\sigma'\neq \varnothing$ if and only if $\sigma=\sigma'$. We indicate the set of vertices of $\pol$ with $\pol_{\bullet}$. Note that if $\sigma\in\pol_{\bullet}$ is a vertex that is also a maximal simplex of $\pol$, then $\sigma$ is an isolated point of $|\pol|$. A \textit{subcomplex $\Tt$ of} $\pol$ is a finite simplicial complex such that $\Tt\subset\pol$. For each vertex $\nu\in\pol_{\bullet}$, the \textit{(open) star $\st(\nu,\pol)$ of $\nu$} in $\pol$ is the open neighbourhood $\bigcup_{\sigma\in\pol,\nu\in\sigma} \ov{\sigma}$. The \textit{closed star of $\nu$} in $\pol$ is defined as $\ol{\st}(\nu,\pol):=\cl(\st(\nu,\pol))$. Observe that the set of simplicies $\sigma\in\pol$ such that $\sigma\in\ol{\st}(\nu,\pol)$ is a subcomplex of $\pol$ for each vertex $\nu\in\pol_{\bullet}$. A \textit{refinement} (also called \textit{subdivision}) \textit{of} $\pol$ is a (finite) simplicial complex $\pol^*$ such that
\begin{itemize}
\item $|\pol^*|=|\pol|$,
\item each simplex $\sigma^*\in\pol^*$ is contained in some simplex $\sigma\in\pol$.
\end{itemize}
We denote $\sd^{\kappa}(\pol):=\sd(\sd^{\kappa-1}(\pol))$ the \textit{$k^{\text{th}}$ barycentric subdivision of} $\pol$, where $\sd^0(\pol):=\pol$. 

Let $\pol$ be a finite simplicial complex of $\R^n$ and $\Ll$ a finite simplicial complex of $\R^m$. A map $h:|\pol|\to\R^m$ is \textit{piecewise affine} if its restriction to each simplex $\sigma$ of $\pol$ is the restriction to $\sigma$ of an affine map $\R^n\to\R^m$. Note that this definition depends on the simplicial complex $\pol$ and not only on the polyhedron $|\pol|$. A map $h:|\pol_{\bullet}|\to \R^m$ extends uniquely to a piecewise affine map from $|\pol|$ to $\R^m$ and we denote this extension again $h:|\pol|\to\R^m$. In particular, a piecewise affine map is completely determined by its values on $|\pol_{\bullet}|$. A piecewise affine map $h:|\pol|\to |\Ll|$ is a \textit{simplicial map} if for each $\nu_0,\ldots,\nu_k\in\pol_{\bullet}$ that span a simplex of $\pol$, the images $h(\nu_0),\ldots,h(\nu_k)$ are vertices of $\Ll$ that span a simplex in $\Ll$.


\subsection{Paw\l{}ucki's desingularization}\label{pawu}

We recall here some definitions and the main result of \cite{p1} that we need in the following sections. Let $p\geq 0$ be an integer and $X\subset\R^n$ a compact definable set. A \textit{definable $\Cont^p$-triangulation of $X$} is a pair $(\pol,\varphi)$, where $\pol$ is a finite simplicial complex of $\R^n$ and $\varphi:|\pol|\to X$ is a definable homeomorphism such that for each $\sigma\in \pol$ the restriction $\varphi|_{\ov{\sigma}}$ is an embedding of class $\Cont^p$ of $\ov{\sigma}$ into $\R^n$. A definable $\Cont^0$-triangulation $(\pol,\varphi)$ is simply called \textit{definable triangulation}. If $\Ee$ is a finite family of definable subsets of $X$, we say that a definable triangulation $(\pol,\varphi)$ is \textit{compatible with $\Ee$} if for each $E\in \Ee$ the inverse image $\varphi^{-1}(E)$  is a union of some open simplices $\ov{\sigma}$, where $\sigma\in \pol$. A definable $\Cont^p$-triangulation $(\pol, \varphi)$ is a \textit{strict definable $\Cont^p$-triangulation} if the definable homeomorphism $\varphi:|\pol|\to X$ is of class $\Cont^p$.

In \cite{p1} Paw\l{}ucki showed the following remarkable result about the existence of a strict definable $\Cont^p$-triangulation $(\pol,\varphi)$ of $X$ compatible with a finite family of definable subsets $\Ee$ of $X$ that `desingularize' a given continuous definable map $f:X\to \R^m$ (in the sense that it smooths the map $f$ to the class $\Cont^p$, that is $f\circ\varphi$ is of class $\Cont^p$). Moreover, such triangulation can be chosen to be a refinement of a given definable triangulation of $X$ compatible with the finite family $\Ee$.

\begin{thm}[Strict $\Cont^p$-refinement theorem, {\cite[\S1]{p1}}]\label{pawudesing}
Let $X\subset\R^n$ be a compact definable set and $f:X\to \R^m$ a continuous definable map. Let $(\Tt,\psi)$ be a definable triangulation compatible with a finite family $\Ee$ of definable subsets of $X$. Then, for each integer $p\geq 1$, there exists a strict $\Cont^p$-triangulation $(\pol,\varphi)$ of $|\Tt|$ such that:
\begin{itemize}
\item $\pol$ is a refinement of $\Tt$.
\item $\varphi(\sigma)=\sigma$ for each simplex $\sigma\in\Tt$.
\item $(\pol,\psi\circ\varphi)$ is a strict $\Cont^p$-triangulation of $X$ compatible with the family $\Ee$.
\item $f\circ\psi\circ\varphi$ is of class $\Cont^p$.
\end{itemize}
\end{thm}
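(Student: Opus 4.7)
The plan is to apply Paw\l{}ucki's general $\Cont^p$-triangulation theorem for definable maps to the composed map $F:=f\circ\psi:|\Tt|\to\R^m$, using the simplices of $\Tt$ and the pull-backs $\psi^{-1}(E)$ as compatibility data, and then to upgrade the resulting triangulation simplex by simplex so that it respects $\Tt$ in the two stronger senses demanded by the statement.

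First I would form the finite family
\[
\Ff:=\{\sigma:\sigma\in\Tt\}\cup\{\psi^{-1}(E):E\in\Ee\}
\]
of definable subsets of $|\Tt|$; the pull-backs $\psi^{-1}(E)$ are already unions of simplices of $\Tt$ by compatibility of $(\Tt,\psi)$ with $\Ee$. I would then apply Paw\l{}ucki's main $\Cont^p$-triangulation theorem \cite{p1,p2} to the compact definable set $|\Tt|$ with compatibility family $\Ff$ and map $F$. This produces a strict $\Cont^p$-triangulation $(\pol_0,\varphi_0)$ of $|\Tt|$ such that $\varphi_0^{-1}(\sigma)$ is a union of open simplices of $\pol_0$ for each $\sigma\in\Tt$ (and similarly for each $\psi^{-1}(E)$), and such that $F\circ\varphi_0$ is of class $\Cont^p$. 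This already delivers two of the desired conclusions: $(\pol_0,\psi\circ\varphi_0)$ is a strict $\Cont^p$-triangulation of $X$ compatible with $\Ee$, and $f\circ\psi\circ\varphi_0=F\circ\varphi_0$ is $\Cont^p$.

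The remaining conditions --- that $\pol$ refines $\Tt$ in $\R^n$ and that $\varphi(\sigma)=\sigma$ for every $\sigma\in\Tt$ --- are then obtained by modifying $\varphi_0$ inductively on the skeleta of $\Tt$. At step $k$, for each $k$-simplex $\sigma\in\Tt$ and assuming the modification has been carried out compatibly on all its faces, I would pre-compose $\varphi_0$ on $\varphi_0^{-1}(\sigma)$ with a definable $\Cont^p$ self-homeomorphism of $\sigma$ that fixes $\partial\sigma$ pointwise and straightens the (a priori curved) simplicial pieces inside $\sigma$ to their affine convex hulls. This forces $\varphi(\sigma)=\sigma$ and, once performed for every simplex of $\Tt$, yields $\pol\subset\R^n$ refining $\Tt$.

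The main obstacle is that all these simplex-by-simplex modifications must glue into a single \emph{strict} $\Cont^p$-homeomorphism $\varphi:|\pol|\to|\Tt|$ without destroying the $\Cont^p$-regularity of $F\circ\varphi$. I would address this by performing each straightening inside a definable $\Cont^p$-collar of $\partial\sigma$ in $\sigma$ and interpolating smoothly to the identity along the collar, so that all derivatives up to order $p$ along $\partial\sigma$ are preserved and the pieces glue $\Cont^p$ across faces of $\Tt$. The existence of such definable $\Cont^p$-collars inside each simplex, together with the preservation of $F\circ\varphi\in\Cont^p$ via the chain rule when the straightenings are themselves $\Cont^p$, would then combine to deliver the strict $\Cont^p$-triangulation required by the statement.
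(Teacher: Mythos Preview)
The paper does not contain a proof of this theorem. Theorem~\ref{pawudesing} is stated in \S\ref{pawu} as a preliminary result quoted from Paw\l{}ucki's work \cite[\S1]{p1}; the paper merely recalls the statement and then uses it as a black box in the proof of Theorem~\ref{main}. There is therefore no ``paper's own proof'' against which to compare your proposal.

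As for the proposal itself, two remarks are in order. First, there is a risk of circularity: the strict $\Cont^p$-refinement theorem \emph{is} the main theorem of \cite{p1}, so invoking ``Paw\l{}ucki's main $\Cont^p$-triangulation theorem'' as input needs care --- you would have to identify a strictly weaker statement in \cite{p1,p2} that does not already contain the refinement clause, and it is not clear such a statement is established there independently. Second, even granting a non-refinement version as input, your straightening step is where essentially all the difficulty lies. Producing, for each simplex $\sigma\in\Tt$, a definable $\Cont^p$ self-homeomorphism of $\sigma$ that fixes $\partial\sigma$, sends the curved cells of $\varphi_0$ to affine ones, and glues $\Cont^p$-smoothly across faces is a substantial problem; the collar-and-interpolate sketch does not explain why the interpolation can be made $\Cont^p$ up to and including $\partial\sigma$ while simultaneously linearising an arbitrary $\Cont^p$ cell decomposition of $\sigma$. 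Paw\l{}ucki's actual argument in \cite{p1} proceeds quite differently, building the refined triangulation directly by an inductive construction rather than by post-hoc straightening of a generic $\Cont^p$-triangulation.
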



\section{Surjective simplicial approximation for definable maps.}

The purpose of this section is to show Proposition \ref{key3}. Let $\pol$ be a finite simplicial complex of $\R^n$, $\Ll$ a finite simplicial complex of $\R^m$ and $f:|\pol|\to|\Ll|$ a continuous map. A simplicial map $h:|\pol|\to|\Ll|$ is called a \textit{simplicial approximation of} $f$ if $f(\st(\nu,\pol))\subset \st(h(\nu),\Ll)$ for each vertex $\nu\in\pol_{\bullet}$. If $h$ is a simplicial approximation of $f$, then for each $x\in |\pol|$ there exists a simplex $\tau_x\in \Ll$ such that $f(x)\in\ov{\tau}_x$ and $h(x)\in\tau_x$ (see \cite[Lem.14.2]{mu}), so
$$
|f(x)-h(x)|_m<\diam(\tau_x)\leq \max_{\tau\in\Ll}\{\diam(\tau)\}.
$$

We now recall the finite simplicial approximation theorem. The proof is included here for the reader's convenience, as we will make use of it in Example \ref{notsurj}(ii) and Theorem \ref{simpapprox}.

\begin{thm}[Finite simplicial approximation, {\cite[Thm.16.1]{mu}}] \label{finitethmapprox}
Let $f:|\pol|\to|\Ll|$ be a continuous map. Then, there exists an integer $\kappa\geq 0$ such that $f$ has a simplicial approximation $h:|\sd^{\kappa}(\pol)|=|\pol|\to|\Ll|$.
\end{thm}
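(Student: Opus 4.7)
The plan is to run the classical Lebesgue-number argument, using the open stars of $\Ll$ to cover $|\pol|$, extracting a Lebesgue number $\delta$, and then choosing $\kappa$ large enough so that the stars of $\sd^{\kappa}(\pol)$ are smaller than $\delta$. Once the subdivision is fine enough, the simplicial approximation is defined on vertices by choosing, for each $\nu\in\sd^\kappa(\pol)_\bullet$, a vertex of $\Ll$ whose open star in $\Ll$ contains $f(\st(\nu,\sd^\kappa(\pol)))$, and extended piecewise affinely.

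First, I would check that the family $\{\st(w,\Ll)\}_{w\in\Ll_\bullet}$ covers $|\Ll|$: any $y\in|\Ll|$ lies in the relative interior $\ov{\tau}$ of a unique simplex $\tau\in\Ll$, and $\ov{\tau}\subset\st(w,\Ll)$ for every vertex $w$ of $\tau$. Therefore
$$
\Uu:=\{f^{-1}(\st(w,\Ll)) : w\in\Ll_\bullet\}
$$
is an open cover of the compact set $|\pol|$, and the Lebesgue number lemma provides $\delta>0$ such that every subset of $|\pol|$ of diameter less than $\delta$ is contained in some member of $\Uu$. Second, letting $\mu(\Qq):=\max_{\sigma\in\Qq}\diam(\sigma)$ denote the mesh of a finite simplicial complex $\Qq\subset\R^n$ of dimension $d$, I would invoke the standard estimate $\mu(\sd(\Qq))\leq\tfrac{d}{d+1}\mu(\Qq)$, whence $\mu(\sd^\kappa(\pol))\to 0$ as $\kappa\to\infty$. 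Any two points of $\st(\nu,\sd^\kappa(\pol))$ lie in simplices sharing the vertex $\nu$, so $\diam(\st(\nu,\sd^\kappa(\pol)))\leq 2\mu(\sd^\kappa(\pol))$. Choose $\kappa$ with $2\mu(\sd^\kappa(\pol))<\delta$; then for every $\nu\in\sd^\kappa(\pol)_\bullet$ the Lebesgue property supplies some $h(\nu)\in\Ll_\bullet$ with $f(\st(\nu,\sd^\kappa(\pol)))\subset\st(h(\nu),\Ll)$, and I would extend $h$ to the piecewise affine map $h:|\sd^\kappa(\pol)|\to\R^m$ determined by these vertex values.

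Finally I would check that $h$ is simplicial and lands in $|\Ll|$, which reduces to the assertion that if $\nu_0,\ldots,\nu_k\in\sd^\kappa(\pol)_\bullet$ span a simplex $\sigma$, then $h(\nu_0),\ldots,h(\nu_k)$ span a simplex of $\Ll$. The barycenter $x$ of $\sigma$ lies in $\ov{\sigma}\subset\bigcap_i\st(\nu_i,\sd^\kappa(\pol))$, hence
$$
f(x)\in\bigcap_{i=0}^k\st(h(\nu_i),\Ll).
$$
Let $\tau\in\Ll$ be the unique simplex with $f(x)\in\ov{\tau}$; the relation $f(x)\in\st(h(\nu_i),\Ll)$ forces $h(\nu_i)$ to be a vertex of $\tau$ for each $i$, and therefore $h(\nu_0),\ldots,h(\nu_k)$ span a face of $\tau$, as required. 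The construction also gives $f(\st(\nu,\sd^\kappa(\pol)))\subset\st(h(\nu),\Ll)$ directly, so $h$ is a simplicial approximation of $f$. I do not expect a serious obstacle; the only genuinely geometric ingredient is the mesh-reduction estimate for barycentric subdivision, and all remaining steps are formal consequences of compactness and of the combinatorial fact that vertices of $\Ll$ span a simplex precisely when their open stars share a common point.
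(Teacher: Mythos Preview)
Your proof is correct and follows essentially the same route as the paper: pull back the open-star cover of $\Ll$, extract a Lebesgue number, subdivide $\pol$ until the stars are small enough, and define $h$ on vertices. The only difference is presentational---the paper cites \cite[Thm.15.4]{mu} and \cite[Lem.14.1]{mu} for the mesh reduction and the extension to a simplicial map, whereas you spell out both explicitly.
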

\begin{proof}
The family $\Ff:=\{f^{-1}(\st(\omega,\Ll))\}_{\omega\in\Ll_{\bullet}}$ is an open covering of $|\pol|$. As $|\pol|$ is compact, there exists a Lebesgue number $\lambda$ for the open covering $\Ff$. That is, a strictly positive number $\lambda$ with the following property: \textit{If $A\subset |\pol|$ satisfies $\diam(A):=\sup_{x,y\in A}|x-y|_n<\lambda$, then there exists $F\in \Ff$ such that $A\subset F$.} Let $\kappa\geq 0$ be an integer such that $\diam(\sigma)<\tfrac{\lambda}{2}$ for each $\sigma\in\sd^{\kappa}(\pol)$ \cite[Thm.15.4]{mu}. Then, for each vertex $\nu\in\sd^{\kappa}(\pol)_{\bullet}$ it holds $\diam(\st(\nu,\sd^{\kappa}(\pol))<\lambda$. In particular, there exists a vertex $\omega\in\Ll$ such that $f(\st(\nu,\sd^{\kappa}(\pol))\subset\st(\omega,\Ll)$. Thus, there exists a map $h:\sd^{\kappa}(\pol)_{\bullet}\to \Ll_{\bullet}$ such that $f(\st(\nu,\sd^{\kappa}(\pol))\subset \st(h(\nu),\Ll)$ for each vertex $\nu\in\pol$. By \cite[Lem.1.14]{mu}, the map $h$ extends to a simplicial map $|\sd^{\kappa}(\pol)|=|\pol|\to|\Ll|$, which is a simplicial approximation of $f$.
\end{proof}

As an immediate consequence:

\begin{cor}\label{finitesimp2}
Let $\veps>0$. Then, there exist two integers $\kappa,\ell\geq 0$ and a simplicial map $h:|\sd^{\kappa}(\pol)|=|\pol|\to|\sd^{\ell}(\Ll)|$ such that $\|f-h\|<\veps$.
\end{cor}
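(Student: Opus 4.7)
The plan is to reduce this to Theorem~\ref{finitethmapprox} by first shrinking the target simplicial complex via barycentric subdivision, and then applying simplicial approximation on the domain.

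First, I would exploit the observation made just before Theorem~\ref{finitethmapprox}: if $h:|\pol|\to|\Ll|$ is any simplicial approximation of $f$, then
$$
\|f-h\|\leq \max_{\tau\in\Ll}\{\diam(\tau)\}.
$$
This says that the uniform error of a simplicial approximation is controlled by the mesh of the target complex. So, if I can arrange for the target to have arbitrarily small mesh, any simplicial approximation will be as close to $f$ as desired.

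Second, I would invoke the classical fact \cite[Thm.15.4]{mu} that iterated barycentric subdivision makes simplex diameters arbitrarily small: given $\veps>0$, there exists an integer $\ell\geq 0$ such that $\diam(\tau)<\veps$ for every $\tau\in\sd^{\ell}(\Ll)$. Since $|\sd^{\ell}(\Ll)|=|\Ll|$ as subsets of $\R^m$, the continuous map $f:|\pol|\to|\Ll|$ may be viewed unchanged as a continuous map $f:|\pol|\to|\sd^{\ell}(\Ll)|$.

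Finally, I would apply Theorem~\ref{finitethmapprox} to this reinterpreted map $f:|\pol|\to|\sd^{\ell}(\Ll)|$: this produces an integer $\kappa\geq 0$ and a simplicial approximation $h:|\sd^{\kappa}(\pol)|=|\pol|\to|\sd^{\ell}(\Ll)|$ of $f$. The estimate recalled in the first paragraph, applied to the pair $(\sd^{\kappa}(\pol),\sd^{\ell}(\Ll))$, then yields
$$
\|f-h\|\leq \max_{\tau\in\sd^{\ell}(\Ll)}\{\diam(\tau)\}<\veps,
$$
which is exactly the desired conclusion. There is no genuine obstacle here; the entire argument is a bookkeeping exercise combining the mesh-shrinking property of barycentric subdivision with the already-proved simplicial approximation theorem, which is why the corollary is labelled an immediate consequence.
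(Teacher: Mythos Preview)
Your proposal is correct and follows exactly the same route as the paper: first choose $\ell$ via \cite[Thm.15.4]{mu} so that every simplex of $\sd^{\ell}(\Ll)$ has diameter $<\veps$, then apply Theorem~\ref{finitethmapprox} to $f:|\pol|\to|\sd^{\ell}(\Ll)|$ and invoke the mesh bound on the error. The paper's proof is the same two-line argument, just stated more tersely.
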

\begin{proof}
By \cite[Thm.15.4]{mu}, there exists an integer $\ell\geq 0$ such that $\diam(\tau)<\veps$ for each $\tau\in \sd^{\ell}(\Ll)$. Now, it is enough to apply the finite simplicial approximation theorem to the map $f:|\pol|\to|\sd^{\ell}(\Ll)|=|\Ll|$ in order to obtain the desired simplicial approximation $h:|\sd^{\kappa}(\pol)|\to|\sd^{\ell}(\Ll)|$.
\end{proof}

The following examples show that the finite simplicial approximation theorem does not guarantee, in general, that the simplicial approximation $h$ is surjective when the involved continuous map $f$ is surjective.

\begin{examples}\label{notsurj}
Let $\Ll$ be the simplicial complex whose underlying polyhedron $|\Ll|\subset\R^2$ is the one showed in Figure \ref{notexample}. 

(i) Let $\pol:=\{\{0\},\{1\},[0,1]\}$ and let $f:|\pol|=[0,1]\to|\Ll|$ be a surjective continuous map. For instance, one can consider as $f$ the Peano curve (see \cite{pe}). For each integer $\kappa\geq 0$ a simplicial map $h:|\sd^{\kappa}(\pol)|=[0,1]\to|\Ll|$ cannot be surjective, because the interval $|\pol|=[0,1]$ has dimension 1 and $|\Ll|$ has dimension 2. In particular, $f$ does not have a simplicial approximation which is surjective.

(ii) Let $\sigma\subset\R^2$ be a simplex of dimension 2 with vertices $\nu_0,\nu_1$ and $\nu_2$. Let $f:\sigma\to|\Ll|$ be an homeomorphism such that $f(\nu_i)=\omega_i$ for $i=0,1,2$. Thus, by the proof of Theorem \ref{finitethmapprox}, the map $f$ admits a simplicial approximation $h:\sigma\to|\Ll|$. As $h$ is a simplicial map, we have $h(\nu_i)\in \{\omega_0, \omega_2\}$ for each vertex $\nu_i$ of $\sigma$. In particular, the simplicial map $h$ is not surjective. $\sqbullet$

\begin{figure}[!ht]
\begin{center}
\begin{tikzpicture}[scale=1.2]

\draw[fill=blue!25,opacity=0.4] (0,0)--(0,2)--(2,2)--(2,0)--cycle;
\draw(0,0)--(0,2)--(2,2)--(2,0)--cycle;
\draw (0,0)--(2,2);
\draw (0,0) node[below left] {\small{$\omega_0$}};
\filldraw(0,0) circle (2pt);
\draw (2,0) node[below right] {\small{$\omega_1$}};
\filldraw(2,0) circle (2pt);
\draw (0,2) node[above left] {\small{$\omega_3$}};
\filldraw(0,2) circle (2pt);
\draw (2,2) node[above right] {\small{$\omega_2$}};
\filldraw(2,2) circle (2pt);
\end{tikzpicture}
\end{center}
\caption{\small{The polyhedron $|\Ll|\subset\R^2$.}}
\label{notexample}
\end{figure}
\end{examples}

Let $\Ll^*\subset\Ll$ be a subcomplex. The \textit{(open) star of $\Ll^*$} in $\Ll$ is the open neighbourhood of $|\Ll^*|$ in $|\Ll|$ defined as (see Figure \ref{figstar})
\begin{equation}\label{starsimp}
\st(\Ll^*,\Ll):=\bigcup_{\omega\in\Ll^*_{\bullet}}\st(\omega,\Ll)\subset |\Ll|.
\end{equation}
The \textit{closed star of $\Ll^*$} in $\Ll$ is defined as $\ol{\st}(\Ll^*,\Ll):=\cl(\st(\Ll^*,\Ll))$. Let now $\omega$ be a vertex of $\Ll$. The \textit{second (open) star $\st^{(2)}(\omega,\Ll)$ of} $\omega$ in $\Ll$ is defined as (see Figure \ref{figstar})
$$
\st^{(2)}(\omega,\Ll):=\st(\ol{\st}(\omega,\Ll),\Ll)\subset |\Ll|.
$$
In particular, as $\ol{\st}(\omega,\Ll)$ is a neighbourhood of $\omega$ in $|\Ll|$, then $\st^{(2)}(\omega,\Ll)$ is an open neighbourhood of $\omega$ in $\Ll$.
 
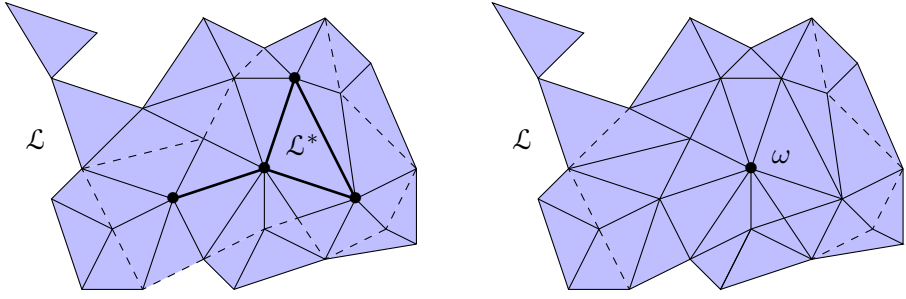
\begin{figure}[!ht]
\begin{center}
\begin{tikzpicture}[scale=0.8]


\draw[fill=blue!25,opacity=0.15,draw=none] (1.5,0.5)--(2.5,0.5)--(3.5,1)--(4,0.5)--(5.5,1)--(7,1.25)--(7,2.5)--(6.25,4.25)--(5.5,5)--(4.5,4.5)--(3.5,5)--(2.5,3.5)--(1,4)--(1.5,2.5)--(1,2)--cycle;

\draw[fill=white,draw=none](1.5,2.5)--(2.5,0.5)--(4.5,1.5)--(5.045,1.681)--(5.5,1)--(6.5,1.5)--(6.5,1.5)--(7,2.5)--(5.75,3.75)--(5.5,5)--(4.5,4.5)--(4,4)--(3.5,3)--cycle;

\draw[fill=blue!25,opacity=0.4,draw=none](1.5,2.5)--(2.5,0.5)--(4.5,1.5)--(5.045,1.681)--(5.5,1)--(6.5,1.5)--(6.5,1.5)--(7,2.5)--(5.75,3.75)--(5.5,5)--(4.5,4.5)--(4,4)--(3.5,3)--cycle;

\draw[fill=blue!25,opacity=0.15,draw=none] (1.75,4.75)--(0.25,5.25)--(1,4)--cycle;

\draw[fill=blue!25,opacity=0.55,draw=none](4.5,2.5)--(6,2)--(5,4)--cycle;

\draw (1.5,2.5) -- (1,2) -- (1.5,0.5) -- (2.5,0.5);
\draw  (1.5,0.5)  -- (2,1.5) -- (1,2);
\draw[dashed] (2,1.5) -- (2.5,0.5);
\draw[dashed] (2,1.5) -- (1.5,2.5);
\draw (1.5,2.5)--(3,2);
\draw (3,2) -- (2,1.5);
\draw (3,2) -- (2.5,0.5);
\draw[dashed] (1.5,2.5) -- (3.5,3);
\draw  (3.5,3) -- (3,2);
\draw (3,2) -- (3.5,1);
\draw[dashed] (3.5,1) -- (2.5,0.5);
\draw[line width=1pt]  (3,2) -- (4.5, 2.5);
\draw (4.5, 2.5)--(3.5,1);
\draw (4.5, 2.5)--(4.5,1.5);
\draw (3.5,1)--(4,0.5);
\draw[dashed] (3.5,1)--(4.5,1.5);
\draw (4.5,2.5)--(3.5,3);
\draw(1.5,2.5)--(2.5,3.5)--(3.5,3);
\draw(2.5,3.5)--(4,4)--(4.5,2.5);
\draw(1.5,2.5)--(1,4)--(2.5,3.5);
\draw[dashed] (3.5,3)--(4,4);
\draw[dashed] (4.5,1.5)--(5.045,1.681);
\draw (5.045,1.681)--(6,2);
\draw[line width=1pt](4.5,2.5)--(5,4)--(6,2)--cycle;
\draw(4,4)--(5,4);
\draw (5.5,1)--(6,2);
\draw(4.5,2.5)--(5.045,1.681);
\draw[dashed] (5.045,1.681)--(5.5,1);
\draw(4.5,1.5)--(5.5,1);
\draw(4.5,1.5)--(4,0.5);
\draw(4,0.5)--(5.5,1);
\draw(6,2)--(5.75,3.75)--(5,4);
\draw(6,2)--(6.5,1.5);
\draw[dashed] (6.5,1.5)--(5.5,1);
\draw(5.5,1)--(7,1.25);
\draw(7,1.25)--(6.5,1.5);
\draw(7,1.25)--(7,2.5);
\draw[dashed] (7,2.5)--(6.5,1.5);
\draw(7,2.5)--(6,2);
\draw[dashed](7,2.5)--(5.75,3.75);
\draw(5,4)--(5.5,5);
\draw[dashed] (5.5,5)--(5.75,3.75);
\draw[dashed] (4,4)--(4.5,4.5);
\draw (4.5,4.5)--(5,4);
\draw(4.5,4.5)--(5.5,5);
\draw(4,4)--(3.5,5)--(4.5,4.5);
\draw(2.5,3.5)--(3.5,5);
\draw(7,2.5)--(6.25,4.25)--(5.75,3.75);
\draw(6.25,4.25)--(5.5,5);
\draw(1.75,4.75)--(0.25,5.25)--(1,4)--cycle;

\draw (5.13,2.9) node{$\Ll^*$};
\draw (4.5,2.5) node{$\bullet$};
\draw (3,2) node{$\bullet$};
\draw (5,4) node{$\bullet$};
\draw (6,2) node{$\bullet$};
\draw (0.75,3) node{$\Ll$};

\draw (8.75,3) node{$\Ll$};

\draw[fill=blue!25,opacity=0.15,draw=none] (9.5,0.5)--(10.5,0.5)--(11.5,1)--(12,0.5)--(13.5,1)--(15,1.25)--(15,2.5)--(14.25,4.25)--(13.5,5)--(12.5,4.5)--(11.5,5)--(10.5,3.5)--(9,4)--(9.5,2.5)--(9,2)--cycle;

\draw[fill=blue!25,opacity=0.15,draw=none](9.75,4.75)--(8.25,5.25)--(9,4)--cycle;

\draw[fill=white,draw=none](9.5,2.5)--(10.5,0.5)--(11.5,1)--(12,0.5)--(13.5,1)--(14.5,1.5)--(15,2.5)--(13.75,3.75)--(13.5,5)--(12.5,4.5)--(11.5,5)--(10.5,3.5)--cycle;

\draw[fill=blue!25,opacity=0.4,draw=none](9.5,2.5)--(10.5,0.5)--(11.5,1)--(12,0.5)--(13.5,1)--(14.5,1.5)--(15,2.5)--(13.75,3.75)--(13.5,5)--(12.5,4.5)--(11.5,5)--(10.5,3.5)--cycle;

\draw (9.5,2.5) -- (9,2) -- (9.5,0.5) -- (10.5,0.5);
\draw  (9.5,0.5)  -- (10,1.5) -- (9,2);
\draw[dashed] (10,1.5) -- (10.5,0.5);
\draw[dashed] (10,1.5) -- (9.5,2.5);
\draw (9.5,2.5)--(11,2);
\draw (11,2) -- (10,1.5);
\draw (11,2) -- (10.5,0.5);
\draw (9.5,2.5) -- (11.5,3);
\draw  (11.5,3) -- (11,2);
\draw (11,2) -- (11.5,1) -- (10.5,0.5);
\draw(11,2) -- (12.5, 2.5);
\draw (12.5, 2.5)--(11.5,1);
\draw (12.5, 2.5)--(12.5,1.5)--(11.5,1);
\draw (11.5,1)--(12,0.5)--(12.5,1.5);
\draw (12.5,2.5)--(11.5,3);
\draw[dashed](9.5,2.5)--(10.5,3.5);
\draw (10.5,3.5)--(11.5,3);
\draw(10.5,3.5)--(12,4)--(12.5,2.5);
\draw(9.5,2.5)--(9,4)--(10.5,3.5);
\draw(11.5,3)--(12,4);
\draw(12.5,2.5)--(13,4)--(14,2)--cycle;
\draw(12,4)--(13,4);
\draw(12.5,1.5)--(14,2);
\draw(12.5,2.5)--(13.5,1)--(14,2);
\draw(12.5,1.5)--(13.5,1);
\draw(12.5,1.5)--(12,0.5);
\draw(12,0.5)--(13.5,1);
\draw(14,2)--(13.75,3.75)--(13,4);
\draw(14,2)--(14.5,1.5);
\draw[dashed] (14.5,1.5) --(13.5,1);
\draw(13.5,1)--(15,1.25)--(14.5,1.5);
\draw(15,1.25)--(15,2.5);
\draw[dashed](15,2.5)--(14.5,1.5);
\draw(15,2.5)--(14,2);
\draw[dashed](15,2.5)--(13.75,3.75);
\draw(13,4)--(13.5,5);
\draw[dashed](13.5,5)--(13.75,3.75);
\draw(12,4)--(12.5,4.5)--(13,4);
\draw(12.5,4.5)--(13.5,5);
\draw(12,4)--(11.5,5)--(12.5,4.5);
\draw(10.5,3.5)--(11.5,5);
\draw(15,2.5)--(14.25,4.25)--(13.75,3.75);
\draw(14.25,4.25)--(13.5,5);
\draw(9.75,4.75)--(8.25,5.25)--(9,4)--cycle;

\draw (12.5,2.5) node{{$\bullet$}};
\draw (13,2.7) node{$\omega$};
\end{tikzpicture}
\end{center}
\caption{\small{The (open) star of a subcomplex $\Ll^*\subset\Ll$ (left) and the second (open) star of a vertex $\omega\in\Ll$ (right).}}
\label{figstar}
\end{figure}

As mentioned in the introduction, in order to prove Theorem \ref{main} we employ Proposition \ref{key3}, that is a `surjective version' of Corollary \ref{finitesimp2} for surjective continuous definable maps. We begin with the following variant of the finite simplicial approximation theorem for continuous surjective definable maps.

\begin{prop}\label{simpapprox}
Let $f:|\pol|\to|\Ll|$ be a surjective continuous definable map. Then, there exists an integer $\kappa\geq 0$ and a surjective simplicial map $h:|\sd^{\kappa}(\pol)|=|\pol|\to|\Ll|$ such that 
$$
f(\st(\nu,\sd^{\kappa}(\pol)))\subset\st^{(2)}(h(\nu),\Ll)
$$
for each vertex $\nu\in\sd^{\kappa}(\pol)_{\bullet}$.
\end{prop}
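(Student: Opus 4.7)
My plan is to strengthen the argument of Theorem \ref{finitethmapprox} by reserving, for each maximal simplex $\tau\in\Ll$, a cluster of vertices of $\sd^{\kappa}(\pol)$ to be sent to the vertices of $\tau$; this will force $h$ to be surjective. The construction I outline actually satisfies the stronger inclusion $f(\st(\nu,\sd^{\kappa}(\pol)))\subset\st(h(\nu),\Ll)$, from which the desired $\st^{(2)}$-condition follows immediately since $\st(h(\nu),\Ll)\subset\st^{(2)}(h(\nu),\Ll)$. The key preliminary fact is: for every maximal $\tau\in\Ll$ of dimension $d_\tau$, surjectivity of $f$ gives $f(f^{-1}(\ov\tau))=\ov\tau$, so definability yields the dimension bound $\dim f^{-1}(\ov\tau)\geq d_\tau$. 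In particular, $f^{-1}(\ov\tau)$ is not contained in the $(d_\tau-1)$-skeleton of $\sd^{\kappa}(\pol)$, so it is possible to choose a point $r_\tau\in f^{-1}(\ov\tau)$ lying in the relative interior of some simplex of $\sd^{\kappa}(\pol)$ of dimension at least $d_\tau$, for the $\kappa$ picked in the next step.

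\textbf{Choice of $\kappa$ and construction of $h$.} Let $\lambda>0$ be a Lebesgue number for the open cover $\{f^{-1}(\st(\omega,\Ll))\}_{\omega\in\Ll_{\bullet}}$ of $|\pol|$. Pick $\kappa$ large enough so that: (i) for every $\nu\in\sd^{\kappa}(\pol)_{\bullet}$ some $\omega_\nu\in\Ll_{\bullet}$ satisfies $f(\st(\nu,\sd^{\kappa}(\pol)))\subset\st(\omega_\nu,\Ll)$, using $\diam(\st(\nu,\sd^{\kappa}(\pol)))<\lambda$ as in \cite[Thm.15.4]{mu}; (ii) the unique simplices $\sigma_\tau\in\sd^{\kappa}(\pol)$ whose relative interiors contain the points $r_\tau$ are pairwise vertex-disjoint for distinct maximal $\tau$ (which follows because the $r_\tau$'s have positive pairwise distances and $\max_{\sigma\in\sd^{\kappa}(\pol)}\diam(\sigma)\to 0$); (iii) $\dim\sigma_\tau\geq d_\tau$ for each maximal $\tau$; and (iv) for every vertex $\nu$ of each $\sigma_\tau$ and every vertex $\omega\in\tau_{\bullet}$, $f(\st(\nu,\sd^{\kappa}(\pol)))\subset\st(\omega,\Ll)$, by continuity of $f$ together with $f(r_\tau)\in\ov\tau\subset\bigcap_{\omega\in\tau_{\bullet}}\st(\omega,\Ll)$. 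For each maximal $\tau=\{\omega_0,\ldots,\omega_{d_\tau}\}\in\Ll$, designate $d_\tau+1$ distinct vertices $\nu_0^\tau,\ldots,\nu_{d_\tau}^\tau$ of $\sigma_\tau$ (possible by (iii)) and set $h(\nu_i^\tau):=\omega_i$; the assignments are unambiguous by (ii). For every remaining vertex $\nu$, set $h(\nu):=\omega_\nu$ using (i).

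\textbf{Verification and main obstacle.} Simpliciality of $h$ follows by the argument of \cite[Thm.16.1]{mu}: for any simplex $\sigma\in\sd^{\kappa}(\pol)$ with vertices $\nu_0,\ldots,\nu_k$ and any point $x\in\ov\sigma$, $f(x)$ lies in $\bigcap_{i=0}^k\st(h(\nu_i),\Ll)$, which is nonempty, so $h(\nu_0),\ldots,h(\nu_k)$ span a simplex of $\Ll$ by \cite[Lem.14.1]{mu}. Surjectivity holds because for each maximal $\tau\in\Ll$ the simplex $h(\sigma_\tau)$ contains $\tau$ by construction, and since $\tau$ is maximal $h(\sigma_\tau)=\tau$; hence $h(|\sd^{\kappa}(\pol)|)$ contains every maximal simplex of $\Ll$ and therefore equals $|\Ll|$. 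The star-inclusion stated in the proposition is then automatic from $\st(h(\nu),\Ll)\subset\st^{(2)}(h(\nu),\Ll)$. The main technical obstacle is condition (iii): iterated barycentric subdivision can push a point in the interior of a high-dimensional simplex of $\pol$ into a lower-dimensional stratum of $\sd^{\kappa}(\pol)$ (for example, the barycenter of a simplex becomes a vertex after one subdivision), so the $r_\tau$ must be chosen with some genericity. The dimension bound from the first paragraph guarantees that such a choice exists for any given $\kappa$, but coordinating it with the other requirements on $\kappa$ (especially (ii)) is the subtle point of the argument.
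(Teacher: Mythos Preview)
Your approach is correct and in fact yields a stronger statement than the proposition: the map you build is a \emph{surjective simplicial approximation} in the ordinary sense, i.e.\ $f(\st(\nu,\sd^{\kappa}(\pol)))\subset\st(h(\nu),\Ll)$, from which the $\st^{(2)}$-inclusion is immediate. The paper takes a genuinely different route. It first produces an ordinary simplicial approximation $h_0$ on some $\sd^{\kappa^*}(\pol)$, then develops a combinatorial device (the maps $h^*$ of Steps~1--2, defined by a ``$\min$'' rule on a fixed linear order of $\Ll_\bullet$) to transport $h_0$ to the second barycentric subdivision $\pol^*=\sd^2(\sd^{\kappa^*}(\pol))$ while keeping $h^*(\sigma)=h_0(\sigma)$ for every $\sigma\in\sd^{\kappa^*}(\pol)$. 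Only then does it modify $h^*$ on one small simplex $\sigma_k'\subset\ov\sigma_k$ so that $h(\sigma_k)=\tau_k$. Because that modification may reassign a vertex from one vertex of $\tau_k$ to another, the ordinary star condition can break, which is precisely why the paper settles for $\st^{(2)}$. Your argument sidesteps the $h^*$ machinery entirely; the cost is that you must enforce the stronger condition~(iv), namely $f(\st(\nu))\subset\ov\tau$ for every vertex $\nu$ of $\sigma_\tau$.

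Regarding the ``subtle point'' you flag: the apparent circularity between choosing $r_\tau$ and choosing $\kappa$ vanishes once you decouple the two. For each maximal $\tau$, first fix (independently of $\kappa$) a point $p_\tau$ and a radius $\epsilon_\tau>0$ with $B(p_\tau,\epsilon_\tau)\cap|\pol|\subset f^{-1}(\ov\tau)\cap\ov{\Sigma_\tau}$, where $\Sigma_\tau\in\pol$ is a maximal simplex of dimension $\geq d_\tau$ meeting $f^{-1}(\ov\tau)$; this exists by the definable dimension bound and openness of $f^{-1}(\ov\tau)$. Now choose $\kappa$ so that the mesh of $\sd^{\kappa}(\pol)$ is below both the Lebesgue number and $\tfrac14\min_\tau\epsilon_\tau$. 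Finally, for this fixed $\kappa$, let $\sigma_\tau$ be any $\dim\Sigma_\tau$-dimensional simplex of $\sd^{\kappa}(\Sigma_\tau)$ containing $p_\tau$. Then $\sigma_\tau\subset B(p_\tau,\epsilon_\tau/4)$, and the star of each vertex of $\sigma_\tau$ lies in $B(p_\tau,\epsilon_\tau)\subset f^{-1}(\ov\tau)$, so (iii) and (iv) hold; since the balls $B(p_\tau,\epsilon_\tau)$ are pairwise disjoint (they lie in the disjoint sets $f^{-1}(\ov\tau)$), condition~(ii) follows as well. No genericity of $r_\tau$ is needed.
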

\begin{proof}
The proof is carried out in several steps.

\noindent{\sc Step 1.} Let $\pol^*$ be any refinement of $\pol$, $\nu$ a vertex of $\pol^*$ and $\Lambda_{\nu}$ the set 
$$
\Lambda_{\nu}:=\{\sigma\in  \pol : \nu\in \sigma\}\subset\pol.
$$
Observe that $\Lambda_{\nu}\neq\varnothing$ for each vertex $\nu\in\pol_{\bullet}^*$. Let $\sigma_{\nu}\in \Lambda_{\nu}$ be a simplex of minimal dimension among the simplices of $\Lambda_{\nu}$, that is $\dim(\sigma_\nu)\leq \dim(\sigma)$ for each $\sigma\in \Lambda_{\nu}$. 
If $\nu\in \partial\sigma_{\nu}$, then there exists a proper face $\sigma$ of $\sigma_{\nu}$ such that $\nu\in \sigma$, so $\sigma\in\Lambda_{\nu}$. As $\sigma$ is a proper face of $\sigma_{\nu}$, then $\dim(\sigma)<\dim(\sigma_{\nu})$, which is a contradiction because $\sigma_{\nu}$ has minimal dimension among the simplices of $\Lambda_{\nu}$. In particular, $\nu\in \ov{\sigma}_{\nu}$. Let $\sigma\in \Lambda_{\nu}$ be another simplex of minimal dimension among the simplices of $\Lambda_{\nu}$, then $\nu\in \ov{\sigma}\cap \ov{\sigma}_{\nu}\neq\varnothing$, so $\sigma=\sigma_{\nu}$. We deduce: \textit{For each vertex $\nu\in\pol^*_{\bullet}$ there exists a unique simplex $\sigma_{\nu}\in \pol$ of minimal dimension such that $\nu\in\sigma_{\nu}$.}

Let $\omega_1,\ldots\,\omega_p$ be all the vertices of $\Ll$. The relation `$\omega_i\leq_{\Ll}\omega_j$ \textit{if and only if} $i\leq j$', defines a linear ordering on the set $\Ll_{\bullet}=\{\omega_1,\ldots,\omega_p\}$ of vertices of $\Ll$. For each vertex $\nu\in\pol^*_{\bullet}$ let $\sigma_{\nu}\in\pol$ be the (unique) simplex of minimal dimension such that $\nu\in\sigma_\nu$ and let $\nu_0,\ldots,\nu_{r_{\nu}}$ be the vertices of $\sigma_{\nu}$. To a simplicial map $h:|\pol|\to|\Ll|$ we associate the map $h^*:|\pol^*_{\bullet}|\to|\Ll_{\bullet}|$, defined as (see Figure \ref{accastar})
$$
h^*(\nu):=\min\{h(\nu_0),\ldots,h(\nu_{r_{\nu}})\},
$$
where the minimum is taken relatively to the linearly ordered set $(\Ll_{\bullet},\leq_{\Ll})$. Observe that the map $h^*$ is well-defined. In fact, for each $\nu\in\pol^*_{\bullet}$ there exists a unique simplex $\sigma_{\nu}\in\pol$ of minimal dimension such that $\nu\in\sigma_{\nu}$, so the set $\{h(\nu_0),\ldots,h(\nu_{r_{\nu}})\}$ is completely determined by $\nu$. Moreover, as $h:|\pol|\to|\Ll|$ is a simplicial map and $\nu_1,\ldots,\nu_{r_{\nu}}\in \pol_{\bullet}$ (because they are vertices of a simplex in $\pol$), then $\{h(\nu_0),\ldots,h(\nu_{r_{\nu}})\}\subset \Ll_{\bullet}$. We consider the piecewise affine extension $|\pol^*|=|\pol|\to\R^m$ of the map $h^*:|\pol^*_{\bullet}|\to|\Ll_{\bullet}|$ and we denote it again with $h^*$. 

\begin{figure}[ht!]
\centering
\begin{subfigure}[b]{0.45\textwidth}
\centering
\begin{tikzpicture}[scale=0.9]
    \path
      (90:3cm) coordinate (a) 
      (210:3cm) coordinate (b) 
      (-30:3cm) coordinate (c) ;
      
             \foreach \x  in {a,b,c}  {
 \foreach \y in {a,b,c}{
 \foreach \z in {a,b,c} \draw
      (barycentric cs:\x=1,\y=1,\z=1) --
      (barycentric cs:\x=1,\y=0,\z=0);
      }
      }
      
        \foreach \x  in {a,b,c}  {
 \foreach \y in {a,b,c}{
 \foreach \z in {a,b,c} \draw
      (barycentric cs:\x=1,\y=1,\z=1) --
      (barycentric cs:\x=1,\y=1,\z=0);
      }
      }
      
      \foreach \x  in {a,b,c}  {
 \foreach \y in {a,b,c}{
 \foreach \z in {a,b,c} \draw
      (barycentric cs:\x=11,\y=5,\z=2) --
      (barycentric cs:\x=1,\y=0,\z=0);
      }
      }
      
            \foreach \x  in {a,b,c}  {
 \foreach \y in {a,b,c}{
 \foreach \z in {a,b,c} \draw
      (barycentric cs:\x=11,\y=5,\z=2) --
      (barycentric cs:\x=4,\y=1,\z=1);
      }
      }

        \foreach \x  in {a,b,c}  {
 \foreach \y in {a,b,c}{
 \foreach \z in {a,b,c} \draw
      (barycentric cs:\x=11,\y=5,\z=2) --
      (barycentric cs:\x=1,\y=1,\z=1);
      }
      }
      
          \foreach \x  in {a,b,c}  {
 \foreach \y in {a,b,c}{
 \foreach \z in {a,b,c} \draw
      (barycentric cs:\x=11,\y=5,\z=2) --
      (barycentric cs:\x=3,\y=1,\z=0);
      }
      }
      
            \foreach \x  in {a,b,c}  {
 \foreach \y in {a,b,c}{
 \foreach \z in {a,b,c} \draw
      (barycentric cs:\x=11,\y=5,\z=2) --
      (barycentric cs:\x=1,\y=1,\z=0);
      }
      }
      
 \foreach \x  in {a,b,c}  {
 \foreach \y in {a,b,c}{
 \foreach \z in {a,b,c}
       \draw (barycentric cs:\x=11,\y=5,\z=2) --  (barycentric cs:\x=5,\y=5,\z=2);
      }
      }

\draw (a)--(b)--(c) --cycle;

\filldraw[blue] (barycentric cs:a=1,b=0,c=0) circle (3pt);
\filldraw[green] (barycentric cs:a=0,b=1,c=0) circle (3pt);
\filldraw[red](barycentric cs:a=0,b=0,c=1) circle (3pt);
\end{tikzpicture}
\end{subfigure}
\begin{subfigure}[b]{0.45\textwidth}
\centering
\begin{tikzpicture}[scale=0.9]
\path
      (90:3 cm) coordinate (a) 
      (210:3 cm) coordinate (b) 
      (-30:3 cm) coordinate (c);  
      
\draw (a)--(b)--(c) --cycle;
      
\foreach \x  in {a,b,c}  {
\foreach \y in {a,b,c}{
\foreach \z in {a,b,c} \draw
      (barycentric cs:\x=1,\y=1,\z=1) --
      (barycentric cs:\x=1,\y=0,\z=0);
      }
      }
      
\foreach \x  in {a,b,c}  {
\foreach \y in {a,b,c}{
\foreach \z in {a,b,c} \draw
      (barycentric cs:\x=1,\y=1,\z=1) --
      (barycentric cs:\x=1,\y=1,\z=0);
      }
      }
      
\foreach \x  in {a,b,c}{
\foreach \y in {a,b,c}{
\foreach \z in {a,b,c} \draw
      (barycentric cs:\x=11,\y=5,\z=2) --
      (barycentric cs:\x=1,\y=0,\z=0);
      }
      }
      
            \foreach \x  in {a,b,c}  {
 \foreach \y in {a,b,c}{
 \foreach \z in {a,b,c} \draw
      (barycentric cs:\x=11,\y=5,\z=2) --
      (barycentric cs:\x=4,\y=1,\z=1);
      }
      }

        \foreach \x  in {a,b,c}  {
 \foreach \y in {a,b,c}{
 \foreach \z in {a,b,c} \draw
      (barycentric cs:\x=11,\y=5,\z=2) --
      (barycentric cs:\x=1,\y=1,\z=1);
      }
      }
      
          \foreach \x  in {a,b,c}  {
 \foreach \y in {a,b,c}{
 \foreach \z in {a,b,c} \draw
      (barycentric cs:\x=11,\y=5,\z=2) --
      (barycentric cs:\x=3,\y=1,\z=0);
      }
      }
      
            \foreach \x  in {a,b,c}  {
 \foreach \y in {a,b,c}{
 \foreach \z in {a,b,c} \draw
      (barycentric cs:\x=11,\y=5,\z=2) --
      (barycentric cs:\x=1,\y=1,\z=0);
      }
      }
      
 \foreach \x  in {a,b,c}  {
 \foreach \y in {a,b,c}{
 \foreach \z in {a,b,c}
       \draw (barycentric cs:\x=11,\y=5,\z=2) --  (barycentric cs:\x=5,\y=5,\z=2);
      }
      }
      
\filldraw[blue] (barycentric cs:a=1,b=0,c=0) circle (3pt);
\filldraw[green] (barycentric cs:a=0,b=1,c=0) circle (3pt);
\filldraw[red](barycentric cs:a=0,b=0,c=1) circle (3pt);

\filldraw[red](barycentric cs:a=1,b=1,c=1) circle (3pt);

\filldraw[red](barycentric cs:a=0,b=1,c=1) circle (3pt);
\filldraw[red](barycentric cs:a=0,b=1,c=3) circle (3pt);
\filldraw[red](barycentric cs:a=0,b=3,c=1) circle (3pt);

\filldraw[red](barycentric cs:a=1,b=0,c=1) circle (3pt);
\filldraw[red](barycentric cs:a=3,b=0,c=1) circle (3pt);
\filldraw[red](barycentric cs:a=1,b=0,c=3) circle (3pt); 

\filldraw[blue](barycentric cs:a=1,b=1,c=0) circle (3pt);
\filldraw[blue](barycentric cs:a=3,b=1,c=0) circle (3pt);
\filldraw[blue](barycentric cs:a=1,b=3,c=0) circle (3pt); 

 \filldraw[red] (barycentric cs:a=5,b=5,c=2) circle (3pt);
  \filldraw[red] (barycentric cs:a=5,b=2,c=5) circle (3pt);
   \filldraw[red] (barycentric cs:a=2,b=5,c=5) circle (3pt);

 \filldraw[red] (barycentric cs:a=5,b=11,c=2) circle (3pt);
  \filldraw[red] (barycentric cs:a=5,b=2,c=11) circle (3pt);
   \filldraw[red] (barycentric cs:a=2,b=5,c=11) circle (3pt);
    \filldraw[red] (barycentric cs:a=2,b=11,c=5) circle (3pt);
  \filldraw[red] (barycentric cs:a=11,b=2,c=5) circle (3pt);
   \filldraw[red] (barycentric cs:a=11,b=5,c=2) circle (3pt);
   
    \filldraw[red] (barycentric cs:a=4,b=1,c=1) circle (3pt);
  \filldraw[red] (barycentric cs:a=1,b=4,c=1) circle (3pt);
   \filldraw[red] (barycentric cs:a=1,b=1,c=4) circle (3pt);

\end{tikzpicture}
\end{subfigure}
\caption{\small{The values of a simplicial map $h$ on $\sigma_{\bullet}$ (left) and the corresponding values of $h^*$ on $\sd^2(\sigma)_{\bullet}$ (right). The red vertices are sent in $\omega_{i_1}$, the blue ones in $\omega_{i_2}$ and the green ones in $\omega_{i_3}$, where $\omega_{i_1}<_{\Ll}\omega_{i_2}<_{\Ll}\omega_{i_3}$.}}
\label{accastar}
\end{figure}
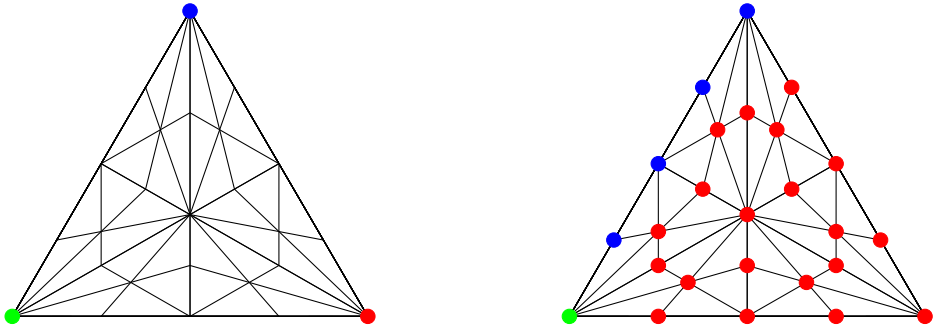

\noindent{\sc Step 2.} We show in this step: \textit{$h^*:|\pol^*|=|\pol|\to|\Ll|$ is a simplicial map such that $h^*(\sigma)=h(\sigma)$ for each simplex $\sigma\in\pol$. Moreover, if $h$ is a simplicial approximation of a continuous map $f:|\pol|\to|\Ll|$, then also $h^*$ is a simplicial approximation of $f$.}

First, we show: \textit{$h^*:|\pol^*|\to|\Ll|$ is a simplicial map}. Let $\sigma^*\in\pol^*$ and let $\nu^*_0,\ldots,\nu^*_{r^*}$ be its vertices. For each $j=0,\ldots,r^*$ let $\sigma_{\nu^*_j}\in\pol$ be the (unique) simplex of minimal dimension such that $\nu^*_j\in\sigma_{\nu^*_j}$. As $\pol^*$ is a refinement of $\pol$, there exists a simplex $\sigma\in\pol$ such that $\sigma^*\subset\sigma$. Moreover, $\sigma_{\nu^*_j}\subset\sigma$ for each $j=0,\ldots,r^*$. In fact, as $\nu_j^*\in\sigma^*\subset \sigma$ and $\nu_j^*\in\ov{\sigma}_{\nu_j^*}$ (see {\sc Step 1}), then $\nu_j^*\in\ov{\sigma}_{\nu_j^*}\cap \sigma$, so $\sigma_{\nu_j^*}$ is a face of $\sigma$. Let $\nu^j_0,\ldots,\nu^j_{r_j}\in\pol_{\bullet}$ be the vertices of $\sigma_{\nu^*_j}$. In particular, $\nu^j_0,\ldots,\nu^j_{r_j}\in\sigma$. As $h$ is a simplicial map and $\sigma$ is convex, then for each $j=0,\ldots,r^*$ 
\begin{multline*}
h^*(\nu^*_j)=\min\{h(\nu_0^j),\ldots,h(\nu_{r_j}^j)\}\in \Conv(\{h(\nu_0^j),\ldots,h(\nu_{r_j}^j)\})\\
=h(\Conv(\{\nu_0^j,\ldots,\nu_{r_j}^j\})\subset h(\sigma)\in\Ll
\end{multline*}
and $h^*(\nu^*_j)$ is a vertex of $h(\sigma)$, because $\nu^j_i$ is a vertex of $\pol$ for each $i=0,\ldots,r_j$. As $h^*$ is piecewise affine, we deduce
\begin{equation}\label{accastar2}
h^*(\sigma^*)=h^*(\Conv(\{\nu^*_0,\ldots,\nu^*_{r^*}\}))=\Conv(\{h^*(\nu^*_0),\ldots,h^*(\nu^*_{r^*})\})\subset h(\sigma).
\end{equation}
Thus, as $h(\sigma)\in\Ll$ and $h^*(\nu^*_j)$ is a vertex of $h(\sigma)$, we have that $h^*(\sigma^*)$ is a face of $h(\sigma)$, so $h^*(\sigma^*)\in\Ll$. We conclude that $h^*(|\pol^*|)\subset |\Ll|$ and $h^*:|\pol^*|=|\pol|\to|\Ll|$ is a simplicial map, as desired.

Then, we show: \textit{$h^*(\sigma)=h(\sigma)$ for each $\sigma\in\pol$}. Let $\sigma\in\pol$. As $\pol^*$ is a refinement of $\pol$, then 
\begin{equation}\label{bigcup}
\sigma=\bigcup_{\{\sigma^*\in\pol^* : \sigma^*\subset\sigma\}}\sigma^*.
\end{equation}
For each $\sigma^*\in\pol^*$ such that $\sigma^*\subset \sigma$, by \eqref{accastar2}, we have that $h^*(\sigma^*)\subset h(\sigma)$. In particular,
$$
h^*(\sigma)=h^*\Big(\bigcup_{\{\sigma^*\in\pol^* : \sigma^*\subset\sigma\}}\sigma^*\Big)=\bigcup_{\{\sigma^*\in\pol^* : \sigma^*\subset\sigma\}}h^*(\sigma^*)\subset h(\sigma).
$$
Thus, in order to conclude it is enough to show: \textit{There exists $\sigma^*\in\pol^*$ such that $\sigma^*\subset\sigma$ and $h^*(\sigma^*)=h(\sigma)$.} Let $\nu_0,\ldots,\nu_r$ be the vertices of $\sigma$ and $k:=\dim(h(\sigma))$. As $h$ is a simplicial map, up to reordering the vertices $\nu_0,\ldots,\nu_r$, we may assume that there exists $k\leq r$ such that $h(\sigma)=\Conv(\{h(\nu_0),\ldots,h(\nu_k)\})$ and $h(\nu_0)<_{\Ll}\ldots <_{\Ll} h(\nu_k)$. Let $\sigma_k\in\pol$ be the simplex of vertices $\nu_0,\ldots,\nu_k$. If there exists a simplex $\sigma^*_k\in\pol^*$ such that $\sigma_k^*\subset\sigma_k$ and $h^*(\sigma^*_k)=h(\sigma)$, then we are done, because $\sigma_k\subset\sigma$ and $h^*(\sigma)\subset h(\sigma)$. Thus, up to replacing $\sigma$ with $\sigma_k$, we may assume $\dim(\sigma)=\dim(h(\sigma))=k$. We proceed inductively on $k$. It is clear that if $k=0$, then $h^*(\sigma)=h(\sigma)$. Assume $k \geq 1$. Let $\sigma_0$ be the facet of $\sigma$ such that $\nu_0\not\in\sigma_0$ and let $\nu\in\pol^*_{\bullet}\cap\sigma$ be a vertex such that $\nu\not\in\sigma_0$. Let $\sigma_{\nu}\in\pol$ be the (unique) simplex of minimal dimension such that $\nu\in\sigma_{\nu}$. As $\nu\in\sigma$, proceeding as in {\sc Step 1}, we deduce that $\sigma_{\nu}$ is a face of $\sigma$. In particular, the vertices of $\sigma_{\nu}$ are vertices of $\sigma$. Moreover, as $\nu\not\in\sigma_0$, then $\nu_0$ is a vertex of $\sigma_{\nu}$, because otherwise, as $\sigma_{\nu}$ is a face of $\sigma$, we would have $\sigma_{\nu}\subset\sigma_0$ and $\nu\not\in\sigma_{\nu}$. We deduce,
\begin{equation}\label{nu0}
h(\nu_0)=\min\{h(\nu_0),\ldots,h(\nu_k)\} \leq_{\Ll} \min_{\nu_j\in\sigma_{\nu}}\{h(\nu_j)\}\leq h(\nu_0), 
\end{equation}
so $h^*(\nu)=\min_{\nu_j\in\sigma_{\nu}}\{h(\nu_j)\}=h(\nu_0)$ for each vertex $\nu\in\pol^*$ such that $\nu\in\sigma\setminus\sigma_0$. As $\dim(\sigma_0)=k-1$, by inductive hypothesis, there exists a simplex $\sigma^*_0\subset\sigma_0$ such that 
$$
h^*(\sigma^*_0)=h(\sigma_0)=\Conv(\{h(\nu_1),\ldots,h(\nu_k)\}).
$$
In particular, $\dim(\sigma^*_0)=k-1$. Let $\sigma^*\subset\sigma$ be the (unique) simplex of $\pol^*$ of dimension $k$ such that $\sigma^*_0\subset\sigma^*$ and let $\nu^*$ be the vertex of $\sigma^*$ such that $\nu^*\not\in\sigma_0$. Then, by \eqref{nu0}, $h^*(\nu^*)=h(\nu_0)$. As $h^*:|\pol^*|\to|\Ll|$ is a simplicial map and both $\sigma^*,\sigma^*_0\in\pol^*$, we deduce
\begin{multline*}
h^*(\sigma^*)=h^*(\Conv(\{\nu^*\}\cup\sigma_0^*))=\Conv(\{h^*(\nu^*)\}\cup h^*(\sigma_0^*))\\
=\Conv( \{h(\nu_0)\}\cup h(\sigma_0))=\Conv(\{h(\nu_0),h(\nu_1),\ldots,h(\nu_k)\})=h(\sigma),
\end{multline*}
as required.

Let $f:|\pol|\to|\Ll|$ be a continuous map. Next, we show: \textit{If $h$ is a simplicial approximation of $f$, then also $h^*$ is a simplicial approximation of $f$.} As $h^*:|\pol^*|\to|\Ll|$ is a simplicial map, we only need to show that $f(\st(\nu,\pol^*))\subset \st(h^*(\nu),\Ll)$ for each vertex $\nu\in\pol^*_{\bullet}$. Let $\nu$ be a vertex of $\pol^*$, $\sigma_\nu\in\pol$ the (unique) simplex of minimal dimension such that $\nu\in\sigma_\nu$ and $\nu_0,\ldots,\nu_r$ the vertices of $\sigma_{\nu}$. Let $k\in\{0,\ldots,r\}$ be such that $h^*(\nu)=h(\nu_k)$. As $\nu\in\sigma_{\nu}$ and $\pol^*$ is a refinement of $\pol$, then each simplex $\sigma\in\pol^*$ such that $\sigma\cap\st(\nu,\pol^*)\neq\varnothing$ satisfies $\sigma\cap \sigma_\nu\neq\varnothing$. Thus, $\sigma\cap\st(\nu,\pol^*)\subset \st(\nu_j,\pol)$ for each $j=0,\ldots,r$ (again because $\pol^*$ is a refinement of $\pol$). In particular, 
$$
\st(\nu,\pol^*)=\bigcup_{\{\sigma\in\pol^* : \sigma\cap\st(\nu,\pol^*)\neq \varnothing\}}(\sigma\cap\st(\nu,\pol^*))\subset\bigcap_{j=0}^{r} \st(\nu_j,\pol)\subset\st(\nu_k,\pol).
$$
As $h$ is a simplicial approximation of $f$, we deduce
$$
f(\st(\nu,\pol^*))\subset  f(\st(\nu_k,\pol))\subset\st(h(\nu_k),\Ll)=\st(h^*(\nu),\Ll),
$$
as desired.


\noindent{\sc Step 3.} Let $f:|\pol|\to|\Ll|$ be a surjective continuous definable map and $\tau$ a maximal simplex of $\Ll$. As $\tau$ is a maximal simplex of $\Ll$, its (relative) interior $\ov{\tau}$ is open in $|\Ll|$, so $f^{-1}(\ov{\tau})$ is open in $|\pol|$. Let $\Xi:=\Xi_{\tau}$ be the set of simplices $\sigma\in\pol$ such that $\ov{\sigma}\cap f^{-1}(\ov{\tau})\neq \varnothing$. As $f$ is surjective and
$$
|\pol|=\bigcup_{\sigma\in\pol}\sigma=\bigcup_{\sigma\in\pol}\ov{\sigma},
$$
we have $\Xi\neq\varnothing$. Let $\sigma_{\tau}\in\Xi$ be a simplex of maximal dimension among the simplices of $\Xi$, that is $\dim(\sigma)\leq \dim(\sigma_{\tau})$ for all $\sigma\in\Xi$. Assume that $\sigma_{\tau}$ is not a maximal simplex of $\pol$, then $\sigma_{\tau}$ is a proper face of another simplex $\sigma\in\pol$. In particular $\dim(\sigma_{\tau})<\dim(\sigma)$. As $f^{-1}(\ov{\tau})$ is open, then $\ov{\sigma}\cap f^{-1}(\ov{\tau})\neq\varnothing$, so $\sigma\in\Xi$. Thus, $\dim(\sigma_\tau)<\dim(\sigma)\leq\dim(\sigma_{\tau})$, which is a contradiction. We deduce that $\sigma_{\tau}$ is a maximal simplex of $\pol$.

As $f$ is a definable map and 
$$
\ov{\tau}=f\Big(\bigcup_{\sigma\in \Xi} \ov{\sigma}\cap f^{-1}(\ov{\tau})\Big)=\bigcup_{\sigma\in\Xi}f(\ov{\sigma}\cap f^{-1}(\ov{\tau})),
$$
we deduce (here is the only point where we use in an essential way that $f$ is definable, see also Example \ref{notsurj}(i)),
\begin{align*}
\dim(\tau)&=\dim(\ov{\tau})=\max_{\sigma\in\Xi}\{\dim(f(\ov{\sigma}\cap f^{-1}(\ov{\tau})))\}\leq\max_{\sigma\in\Xi}\{\dim(\ov{\sigma}\cap f^{-1}(\ov{\tau}))\}\\
&\leq \max_{\sigma\in\Xi}\{\dim(\ov{\sigma})\}=\max_{\sigma\in\Xi}\{\dim(\sigma)\}\leq\dim(\sigma_{\tau}).
\end{align*}

Let $\tau_1,\ldots,\tau_s$ be the maximal simplices of $\Ll$. We have: \textit{For each $k=1,\ldots,s$ there exists a maximal simplex $\sigma_k\in \pol$ such that $\dim(\sigma_k)\geq \dim(\tau_k)$ and $\ov{\sigma_k}\cap f^{-1}(\ov{\tau}_k)\neq\varnothing$.}

\noindent{\sc Step 4.} Let $\sigma_1,\ldots,\sigma_s$ be the maximal simplices of $\pol$ defined in {\sc Step 3}. It might happen that $\sigma_k=\sigma_{k'}$ for some $k,k'\in\{1,\ldots,s\}$ such that $k\neq k'$. We show in this step: \textit{There exists an integer $\kappa^{\circ}\geq 0$ with the following property: for each integer $\kappa^{\bullet}\geq \kappa^{\circ}$ and for each $k=1,\ldots,s$ there exists a maximal simplicex $\sigma'_k$ of $\sd^{\kappa^{\bullet}}(\pol)$ such that:
\begin{itemize}
\item $\dim(\sigma'_k)\geq \dim(\tau_k)$,
\item $\ov{\sigma}'_k\cap f^{-1}(\ov{\tau}_k)\neq\varnothing$,
\item if $k\neq k'$, then $\sigma'_k\neq \sigma'_k$.
\end{itemize}}

Let $\tau_1,\ldots,\tau_s$ be the maximal simplices of $\Ll$ and let $k\in\{1,\ldots,s\}$. As $\sigma_k$ is a maximal simplex of $\pol$, $\ov{\sigma}_k$ is open in $\pol$. In particular, also $\ov{\sigma_k}\cap f^{-1}(\ov{\tau}_k)$ is open in $\pol$, so in $\ov{\sigma}_k$. Thus, $\dim(\ov{\sigma_k}\cap f^{-1}(\ov{\tau}_k))=\dim(\ov{\sigma}_k)=\dim(\sigma_k)$ because $\ov{\sigma_k}\cap f^{-1}(\ov{\tau}_k)$ is open in $\ov{\sigma}_k$ and non-empty. Let $i_1,\ldots,i_{\ell_k}\in \{1,\ldots,s\}$ be all the indices such that $\sigma_{i_j}=\sigma_k$. We assume that the indices $i_j$ are all distinct. As 
$$
 f(\ov{\sigma}_{i_j}\cap f^{-1}(\ov{\tau}_{i_j}))\subset f(\ov{\sigma}_{i_j})\cap\ov{\tau}_{i_j}\subset \ov{\tau}_{i_j},
$$
and $\ov{\tau}_{i_j}\cap \ov{\tau}_{i_{j'}}=\varnothing$ if $j\neq j'$, we deduce that
\begin{equation}\label{diversi4}
(\ov{\sigma}_{i_j}\cap f^{-1}(\ov{\tau}_{i_j}))\cap(\ov{\sigma}_{i_{j'}}\cap f^{-1}(\ov{\tau}_{i_{j'}}))=(\ov{\sigma}_{k}\cap f^{-1}(\ov{\tau}_{i_j}))\cap(\ov{\sigma}_{k}\cap f^{-1}(\ov{\tau}_{i_{j'}}))=\varnothing
\end{equation}
for each $j\neq j'$.

As $\sigma_k$ is a convex subset of $\R^n$, by \cite[11.1.2.7]{be}, for each $j=1,\ldots,\ell_k$ there exist $x_{i_j}\in  \ov{\sigma}_{i_j}\cap f^{-1}(\ov{\tau}_{i_j})$ and $\eps_{i_j}>0$ such that
\begin{equation}\label{pallina}
\{x\in |\pol| : |x_{i_j}-x|_n<\eps_{i_j}\}=\{x\in \ov{\sigma}_k : |x_{i_j}-x|_n<\eps_{i_j}\} \subset\ov{\sigma}_{i_j}\cap f^{-1}(\ov{\tau}_{i_j}).
\end{equation}
Let
$$
\eps_{k}:=\min\{\eps_{i_1},\ldots,\eps_{i_{\ell_k}}\}>0 \text{\, and \, } \eps:=\min\{\eps_1,\ldots,\eps_s\}>0.
$$

By \cite[Thm.15.4]{mu}, there exists an integer $\kappa^{\circ}\geq 0$ such that $\diam(\sigma)<\eps$ for each $\sigma\in \sd^{\kappa^{\circ}}(\pol)$. Let $\kappa^{\bullet}$ be an integer such that $\kappa^{\bullet}\geq \kappa^{\circ}$. As $\sd^{\kappa^{\bullet}}(\pol)$ is a refinement of $\sd^{\kappa^{\circ}}(\pol)$, then $\diam(\sigma)<\eps$ for each $\sigma\in \sd^{\kappa^{\bullet}}(\pol)$. Let $j\in\{1,\ldots,\ell_k\}$. Then, there exists a simplex $\sigma'_{i_j}\in \sd^{\kappa^{\bullet}}(\pol)$ such that $\sigma'_{i_j}\subset \ov{\sigma}_{i_j}\cap f^{-1}(\ov{\tau}_{i_j})$ and $\dim(\sigma'_{i_j})=\dim(\sigma_{i_j})(=\dim(\sigma_k))$. In fact, as
$$
\sigma_k=\sigma_{i_j}=\bigcup\{\sigma\in \sd^{\kappa^{\bullet}}(\pol) : \sigma\subset\sigma_k \text{ and } \dim(\sigma)=\dim(\sigma_k)\},
$$
then there exists a simplex $\sigma'_{i_j}\in \sd^{\kappa^{\bullet}}(\pol)$ such that $\sigma'_{i_j}\subset\sigma_k$, $\dim(\sigma'_{i_j})=\dim(\sigma_k)$ and $x_{i_j}\in\sigma'_{i_j}$. As $\diam(\sigma'_{i_j})<\eps\leq\eps_k\leq \eps_{i_j}$, by \eqref{pallina}, we deduce
$$
\sigma'_{i_j}\subset \{x\in |\pol| : |x_{i_j}-x|_n<\eps_{i_j}\}\subset \ov{\sigma}_{i_j}\cap f^{-1}(\ov{\tau}_{i_j}),
$$ 
because for each $x\in \sigma'_{i_j}$ it holds
$$
|x_{i_j}-x|_n\leq\diam(\sigma'_{i_j})<\eps_{i_j}.
$$ 
In particular, $\ov{\sigma}'_{i_j}\cap f^{-1}(\ov{\tau}_{i_j})\neq\varnothing$. Moreover, by \eqref{diversi4}, the simplices $\sigma'_{i_j}$ are all distinct. In order to conclude, it is enough to observe that
$$
\dim(\sigma'_{i_j})=\dim(\sigma_k)=\dim(\sigma_{i_j})\geq \dim(\tau_{i_j})
$$
for each $j=1,\ldots,\ell_k$. 

\noindent{\sc Step 5.} By the finite simplicial approximation theorem (see Theorem \ref{finitethmapprox} and its proof), there exists an integer $\kappa^*\geq 0$ such that for each $\kappa^{\star}\geq \kappa^*$ there exists a simplicial approximation $|\sd^{\kappa^{\star}}(\pol)|=|\pol|\to |\Ll|$ of $f$. Let $\kappa^{\circ}\geq 0$ be the integer defined in {\sc Step 4}. We may assume that $\kappa^*\geq \kappa^{\circ}$. Let $h_0:|\pol^*|\to|\Ll|$ be a simplicial approximation of $f$. The simplicial map $h_0$ might not be surjective, so we have to modify it. Let $\kappa:=\kappa^*+2$ and let 
$$
\pol^*:=\sd^\kappa(\pol)=\sd^{\kappa^*+2}(\pol)=\sd^{2}(\sd^{\kappa^*}(\pol))
$$ 
be the second barycentric subdivision of $\sd^{\kappa^*}(\pol)$. In order to lighten the notation we replace $\pol$ with $\sd^{\kappa^*}(\pol)$, so $\pol^*=\sd^2(\pol)$. Let $h^*:|\pol^*|=|\pol|\to|\Ll|$ be the map associated to the simplicial map $h_0$ with respect to the refinement $\pol^*=\sd^2(\pol)$ (see {\sc Step 1}). By {\sc Step 2}, $h^*$ is a simplicial approximation of $f$ such that $h^*(\sigma)=h_0(\sigma)$ for each $\sigma\in\pol$. 

Let $\tau_k$ be a maximal simplex of $\Ll$ and $\sigma_k$ a maximal simplex of $\pol$ such that  (see {\sc Step 3}) $\dim(\sigma_k)\geq\dim(\tau_k)$ and $\ov{\sigma_k}\cap f^{-1}(\ov{\tau}_k)\neq\varnothing$. Let $\nu_0,\ldots,\nu_{r_k}$ be the vertices of $\sigma_k$. By {\sc Step 4}, we may assume that the simplices $\sigma_k$ are all distinct. As $h_0:|\pol|\to|\Ll|$ is a simplicial approximation of $f$, we have
$$
f(\ov{\sigma}_k\cap f^{-1}(\ov{\tau}_k))\subset f(\ov{\sigma}_k)\cap f(f^{-1}(\ov{\tau}_k))\subset f(\st(\nu_j,\pol))\cap \ov{\tau}_k\subset \st(h_0(\nu_j),\Ll)\cap \ov{\tau}_k.
$$
for each $j=0,\ldots,r_k$. In particular, $\st(h_0(\nu_j),\Ll)\cap \ov{\tau}_k\neq \varnothing$, so $h_0(\nu_j)$ is a vertex of $\tau_k$. We deduce that $h_0(\sigma_k)$ is a face of $\tau_k$. By {\sc Step 2}, $h^*(\sigma_k)=h_0(\sigma_k)$, so also $h^*(\sigma_k)$ is a face of $\tau_k$.

\noindent{\sc Step 6.} As $|\Ll|$ has finitely many connected components, we may assume that $|\Ll|$ is connected. If $\tau\in \Ll_{\bullet}$ is a vertex that is also a maximal simplex of $\Ll$, then $\tau$ is an isolated point of $|\Ll|$, so $|\Ll|=\tau$, because $|\Ll|$ is connected. In particular, as the thesis is clear when $|\Ll|$ is a single point, we may assume that: \textit{All the maximal simplices of $\Ll$ have dimension $\geq 1$}.

Let $\tau_1,\ldots,\tau_s$ be the maximal simplices of $\Ll$ and let $d_k:=\dim(\tau_k)\geq 1$ for each $k=1,\ldots,s$. Let $\sigma_1,\ldots,\sigma_s$ be maximal simplices of $\pol$ such that $r_k:=\dim(\sigma_k)\geq d_k$ and $\ov{\sigma_k}\cap f^{-1}(\ov{\tau}_k)\neq\varnothing$ for each $k=1,\ldots,s$ (see {\sc Step 3}). By {\sc Step 4}, we may assume that $\sigma_k\neq\sigma_{k'}$ for each $k\neq k'$. We want to construct: \textit{A simplicial map $h:|\pol^*|=|\pol|\to |\Ll|$ such that
\begin{itemize}
\item[{\rm(i)}] $h(\sigma_k)=\tau_k$ for each $k=1,\ldots,s$,
\item[{\rm(ii)}]  $h=h^*$ on $|\pol|\setminus(\ov{\sigma}_1\cup\ldots\cup\ov{\sigma}_s)$.
\end{itemize}
} 

For each $k=1,\ldots,s$ let $b_k$ be the barycentre of $\sigma_k$ and $\sigma'_k\in\pol^*=\sd^2(\pol)$ a simplex of dimension $r_k=\dim(\sigma_k)$ such that $b_k$ is a vertex of $\sigma'_k$. As $\pol^*$ is the second barycentric subdivision $\sd^2(\pol)$ of $\pol$ and $\sigma_k$ is a maximal simplex of $\pol$, it follows that $\sigma'_k\subset \ov{\sigma}_k$. In particular, the vertices of $\sigma_k'$ are not vertices of $\sigma_k$. As $\sigma_k\neq\sigma_{k'}$ if $k\neq k'$, then $\ov{\sigma}_k\cap \ov{\sigma}_{k'}=\varnothing$. Thus, 
\begin{equation}\label{distintisimp}
\sigma'_k\cap\sigma'_{k'}\subset\ov{\sigma}_k\cap\ov{\sigma}_{k'}=\varnothing
\end{equation}
for each $k\neq k'$. 

For each $k=1,\ldots,s$ let $\nu_{0,k}:=b_{k},\nu_{1,k},\ldots,\nu_{r_k,k}$ be the vertices of $\sigma'_k$ and $\omega_{0,k},\ldots,\omega_{d_k,k}$ the vertices of $\tau_k$. We define the map (see Figure \ref{accastar3})
$$
h:|\pol^*_{\bullet}|\to |\Ll_{\bullet}|,\, \nu\mapsto
\begin{cases}
\omega_{i,k}, \text{ if } \nu=\nu_{i,k} \text{ for some } k=0,\ldots,s \text{ and } i=0,\ldots,d_k;\\
\omega_{0,k}, \text{ if } \nu\in \{\nu_{d_k+1,k},\ldots,\nu_{r_k,k}\} \text{ for some } k=0,\ldots,s;\\
h^*(\nu), \text{ otherwise.}
\end{cases}
$$
By \eqref{distintisimp}, the map $h$ is well-defined. Observe that we have $h(\nu)=h^*(\nu)$ for each vertex $\nu\in \pol^*_{\bullet}\setminus (\sigma'_1\cup\ldots\cup\sigma'_k)$. We denote again with $h$ the piecewise affine extension $|\pol^*|=|\pol|\to\R^m$ of the map $h:|\pol^*_{\bullet}|\to |\Ll_{\bullet}|$. Let us show that $h$ is a simplicial map from $|\pol^*|$ to $|\Ll|$ which satisfies property (i) and (ii).

\begin{figure}[ht!]
\centering
\begin{subfigure}[b]{0.45\textwidth}
\centering
\begin{tikzpicture}[scale=0.9]
\path
      (90:3cm) coordinate (a) 
      (210:3cm) coordinate (b) 
      (-30:3cm) coordinate (c);  
      
\draw (a)--(b)--(c) --cycle;
      
\foreach \x  in {a,b,c}  {
\foreach \y in {a,b,c}{
\foreach \z in {a,b,c} \draw
      (barycentric cs:\x=1,\y=1,\z=1) --
      (barycentric cs:\x=1,\y=0,\z=0);
      }
      }
      
\foreach \x  in {a,b,c}  {
\foreach \y in {a,b,c}{
\foreach \z in {a,b,c} \draw
      (barycentric cs:\x=1,\y=1,\z=1) --
      (barycentric cs:\x=1,\y=1,\z=0);
      }
      }
      
\foreach \x  in {a,b,c}{
\foreach \y in {a,b,c}{
\foreach \z in {a,b,c} \draw
      (barycentric cs:\x=11,\y=5,\z=2) --
      (barycentric cs:\x=1,\y=0,\z=0);
      }
      }
      
            \foreach \x  in {a,b,c}  {
 \foreach \y in {a,b,c}{
 \foreach \z in {a,b,c} \draw
      (barycentric cs:\x=11,\y=5,\z=2) --
      (barycentric cs:\x=4,\y=1,\z=1);
      }
      }

        \foreach \x  in {a,b,c}  {
 \foreach \y in {a,b,c}{
 \foreach \z in {a,b,c} \draw
      (barycentric cs:\x=11,\y=5,\z=2) --
      (barycentric cs:\x=1,\y=1,\z=1);
      }
      }
      
          \foreach \x  in {a,b,c}  {
 \foreach \y in {a,b,c}{
 \foreach \z in {a,b,c} \draw
      (barycentric cs:\x=11,\y=5,\z=2) --
      (barycentric cs:\x=3,\y=1,\z=0);
      }
      }
      
            \foreach \x  in {a,b,c}  {
 \foreach \y in {a,b,c}{
 \foreach \z in {a,b,c} \draw
      (barycentric cs:\x=11,\y=5,\z=2) --
      (barycentric cs:\x=1,\y=1,\z=0);
      }
      }
      
 \foreach \x  in {a,b,c}  {
 \foreach \y in {a,b,c}{
 \foreach \z in {a,b,c}
       \draw (barycentric cs:\x=11,\y=5,\z=2) --  (barycentric cs:\x=5,\y=5,\z=2);
      }
      }
      
\filldraw[blue] (barycentric cs:a=1,b=0,c=0) circle (3pt);
\filldraw[blue] (barycentric cs:a=0,b=1,c=0) circle (3pt);
\filldraw[red](barycentric cs:a=0,b=0,c=1) circle (3pt);

\filldraw[red](barycentric cs:a=1,b=1,c=1) circle (3pt);

\filldraw[red](barycentric cs:a=0,b=1,c=1) circle (3pt);
\filldraw[red](barycentric cs:a=0,b=1,c=3) circle (3pt);
\filldraw[red](barycentric cs:a=0,b=3,c=1) circle (3pt);

\filldraw[red](barycentric cs:a=1,b=0,c=1) circle (3pt);
\filldraw[red](barycentric cs:a=3,b=0,c=1) circle (3pt);
\filldraw[red](barycentric cs:a=1,b=0,c=3) circle (3pt); 

\filldraw[blue](barycentric cs:a=1,b=1,c=0) circle (3pt);
\filldraw[blue](barycentric cs:a=3,b=1,c=0) circle (3pt);
\filldraw[blue](barycentric cs:a=1,b=3,c=0) circle (3pt); 

 \filldraw[red] (barycentric cs:a=5,b=5,c=2) circle (3pt);
  \filldraw[red] (barycentric cs:a=5,b=2,c=5) circle (3pt);
   \filldraw[red] (barycentric cs:a=2,b=5,c=5) circle (3pt);

 \filldraw[red] (barycentric cs:a=5,b=11,c=2) circle (3pt);
  \filldraw[red] (barycentric cs:a=5,b=2,c=11) circle (3pt);
   \filldraw[red] (barycentric cs:a=2,b=5,c=11) circle (3pt);
    \filldraw[red] (barycentric cs:a=2,b=11,c=5) circle (3pt);
  \filldraw[red] (barycentric cs:a=11,b=2,c=5) circle (3pt);
   \filldraw[red] (barycentric cs:a=11,b=5,c=2) circle (3pt);
   
    \filldraw[red] (barycentric cs:a=4,b=1,c=1) circle (3pt);
  \filldraw[red] (barycentric cs:a=1,b=4,c=1) circle (3pt);
   \filldraw[red] (barycentric cs:a=1,b=1,c=4) circle (3pt);
\end{tikzpicture}
\end{subfigure}
\begin{subfigure}[b]{0.45\textwidth}
\centering
\begin{tikzpicture}[scale=0.9]
\path
      (90:3cm) coordinate (a) 
      (210:3cm) coordinate (b) 
      (-30:3cm) coordinate (c);  
      
\draw (a)--(b)--(c) --cycle;
      
\foreach \x  in {a,b,c}  {
\foreach \y in {a,b,c}{
\foreach \z in {a,b,c} \draw
      (barycentric cs:\x=1,\y=1,\z=1) --
      (barycentric cs:\x=1,\y=0,\z=0);
      }
      }
      
\foreach \x  in {a,b,c}  {
\foreach \y in {a,b,c}{
\foreach \z in {a,b,c} \draw
      (barycentric cs:\x=1,\y=1,\z=1) --
      (barycentric cs:\x=1,\y=1,\z=0);
      }
      }
      
\foreach \x  in {a,b,c}{
\foreach \y in {a,b,c}{
\foreach \z in {a,b,c} \draw
      (barycentric cs:\x=11,\y=5,\z=2) --
      (barycentric cs:\x=1,\y=0,\z=0);
      }
      }
      
            \foreach \x  in {a,b,c}  {
 \foreach \y in {a,b,c}{
 \foreach \z in {a,b,c} \draw
      (barycentric cs:\x=11,\y=5,\z=2) --
      (barycentric cs:\x=4,\y=1,\z=1);
      }
      }

        \foreach \x  in {a,b,c}  {
 \foreach \y in {a,b,c}{
 \foreach \z in {a,b,c} \draw
      (barycentric cs:\x=11,\y=5,\z=2) --
      (barycentric cs:\x=1,\y=1,\z=1);
      }
      }
      
          \foreach \x  in {a,b,c}  {
 \foreach \y in {a,b,c}{
 \foreach \z in {a,b,c} \draw
      (barycentric cs:\x=11,\y=5,\z=2) --
      (barycentric cs:\x=3,\y=1,\z=0);
      }
      }
      
            \foreach \x  in {a,b,c}  {
 \foreach \y in {a,b,c}{
 \foreach \z in {a,b,c} \draw
      (barycentric cs:\x=11,\y=5,\z=2) --
      (barycentric cs:\x=1,\y=1,\z=0);
      }
      }
      
 \foreach \x  in {a,b,c}  {
 \foreach \y in {a,b,c}{
 \foreach \z in {a,b,c}
       \draw (barycentric cs:\x=11,\y=5,\z=2) --  (barycentric cs:\x=5,\y=5,\z=2);
      }
      }
      
\filldraw[blue] (barycentric cs:a=1,b=0,c=0) circle (3pt);
\filldraw[blue] (barycentric cs:a=0,b=1,c=0) circle (3pt);
\filldraw[red](barycentric cs:a=0,b=0,c=1) circle (3pt);

\filldraw[red](barycentric cs:a=1,b=1,c=1) circle (3pt);

\filldraw[red](barycentric cs:a=0,b=1,c=1) circle (3pt);
\filldraw[red](barycentric cs:a=0,b=1,c=3) circle (3pt);
\filldraw[red](barycentric cs:a=0,b=3,c=1) circle (3pt);

\filldraw[red](barycentric cs:a=1,b=0,c=1) circle (3pt);
\filldraw[red](barycentric cs:a=3,b=0,c=1) circle (3pt);
\filldraw[red](barycentric cs:a=1,b=0,c=3) circle (3pt); 

\filldraw[blue](barycentric cs:a=1,b=1,c=0) circle (3pt);
\filldraw[blue](barycentric cs:a=3,b=1,c=0) circle (3pt);
\filldraw[blue](barycentric cs:a=1,b=3,c=0) circle (3pt); 

 \filldraw[red] (barycentric cs:a=5,b=5,c=2) circle (3pt);
  \filldraw[red] (barycentric cs:a=5,b=2,c=5) circle (3pt);
  
   \filldraw[blue] (barycentric cs:a=2,b=5,c=5) circle (3pt);

\filldraw[red] (barycentric cs:a=5,b=2,c=11) circle (3pt);
\filldraw[red] (barycentric cs:a=11,b=2,c=5) circle (3pt);
\filldraw[red] (barycentric cs:a=11,b=5,c=2) circle (3pt);
\filldraw[red] (barycentric cs:a=2,b=11,c=5) circle (3pt);
\filldraw[red] (barycentric cs:a=5,b=11,c=2) circle (3pt);

\filldraw[green] (barycentric cs:a=2,b=5,c=11) circle (3pt);
   
\filldraw[red] (barycentric cs:a=4,b=1,c=1) circle (3pt);
\filldraw[red] (barycentric cs:a=1,b=4,c=1) circle (3pt);
\filldraw[red] (barycentric cs:a=1,b=1,c=4) circle (3pt);

\end{tikzpicture}
\end{subfigure}
\caption{\small{The values of a simplicial map $h^*$ on $\sd^2(\sigma_k)_{\bullet}$ (left) and the corresponding values of $h$ (right). The red vertices are sent in $\omega_{0,k}$, the blue ones in $\omega_{1,k}$ and the green ones in $\omega_{2,k}$, where $\tau_k=\Conv(\{\omega_{0,k},\omega_{1,k},\omega_{2,k}\})$ and $\omega_{0,k}<_{\Ll}\omega_{1,k}<_{\Ll}\omega_{2,k}$.}}
\label{accastar3}
\end{figure}
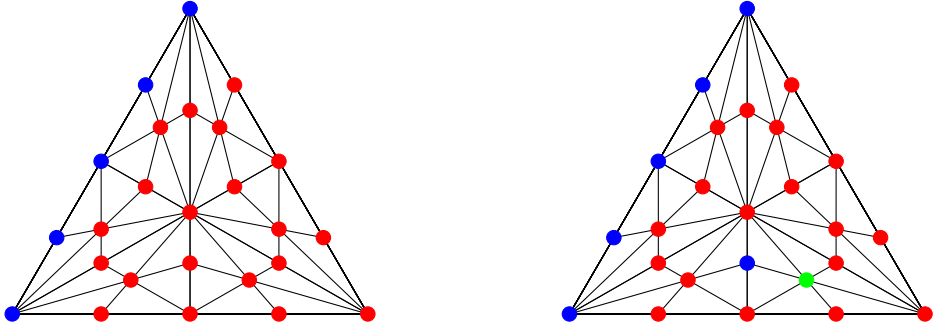

First, we show (i). Let $k\in\{1,\ldots,s\}$. As $h$ is piecewise affine and $r_k\geq d_k$, we have 
\begin{multline*}
h(\sigma'_k)=h(\Conv(\{\nu_{0,k},\ldots,\nu_{r_k,k}\}))=\Conv(\{h(\nu_{0,k}),\ldots,h(\nu_{r_k,k})\})\\
=\Conv(\{\omega_{0,k},\ldots,\omega_{d_k,k}\})=\tau_k,
\end{multline*}
so $\tau_k=h(\sigma'_k)\subset h(\sigma_k)$. Let $\sigma\in\pol^*$ be a simplex such that $\sigma\subset \sigma_k$ and let $\nu$ be a vertex of $\sigma$. Then, as $h^*(\sigma)\subset h^*(\sigma_k)$, $h^*:|\pol^*|\to|\Ll|$ is a simplicial map and $h^*(\sigma_k)$ is a face of $\tau_k$ (see {\sc Step 5}), we have that $h^*(\nu)$ is a vertex of $\tau_k$. In particular, by the definition of $h$, it follows that also $h(\nu)$ is a vertex of $\tau_k$. We deduce, $h(\sigma)\subset\tau_k$. As $\pol^*$ is a refinement of $\pol$ and $\sigma_k\in \pol$, by \eqref{bigcup}, we have $h(\sigma_k)\subset\tau_k$. Thus, $h(\sigma_k)=\tau_k$, as desired.

Then, we show (ii). Let $\nu\in\pol^*_{\bullet}\setminus(\sigma_1\cup\ldots\cup\sigma_s)$. As $\st(\nu,\pol^*)\cap \sigma=\varnothing$ for each simplex $\sigma\in \pol^*$ such that $\nu$ is not a vertex of $\sigma$, by \eqref{bigcup}, we deduce 
$$
\st(\nu,\pol^*)\cap\sigma_k=\st(\nu,\pol^*)\cap\bigcup_{\{\sigma\in\pol^* : \sigma\subset\sigma_k\}}\sigma=\bigcup_{\{\sigma\in\pol^* : \sigma\subset\sigma_k\}}(\st(\nu,\pol^*)\cap\sigma)=\varnothing.
$$
for each $k=1,\ldots,s$. Thus, $\st(\nu,\pol^*)\subset |\pol|\setminus(\sigma_1\cup\ldots\cup\sigma_s)$. As $\sigma_1,\ldots,\sigma_s$ are maximal simplices of $\pol$, their (relative) interiors $\ov{\sigma_1},\ldots,\ov{\sigma_s}$ are open in $|\pol|$, so
\begin{align}\label{closedstar}
\begin{split}
\ol{\st}(\nu,\pol^*)&=\cl(\st(\nu,\pol^*))\subset\cl(|\pol|\setminus(\sigma_1\cup\ldots\cup\sigma_s))\\
&=|\pol|\setminus(\ov{\sigma_1}\cup\ldots\cup\ov{\sigma_s})\subset|\pol|\setminus(\sigma'_1\cup\ldots\cup\sigma'_s)=|\pol^*|\setminus(\sigma'_1\cup\ldots\cup\sigma'_s).
\end{split}
\end{align}
Let $x\in \ol{\st}(\nu,\pol^*)$ and let $\sigma\in\pol^*$ be a simplex such that $x\in\sigma$. Then, 
$$
x=\lambda_0\nu_0+\ldots+\lambda_r\nu_r,
$$
where $\nu_0,\ldots,\nu_r$ are the vertices of $\sigma$ and $(\lambda_0,\ldots,\lambda_r)$ are the barycentric coordinates of $x$. As $\sigma\subset \ol{\st}(\nu,\pol^*)$ and $h(\nu)=h^*(\nu)$ for each $\nu\in \pol^*_{\bullet}\setminus (\sigma'_1\cup\ldots\cup\sigma'_s)$, by \eqref{closedstar}, we have $h(\nu_i)=h^*(\nu_i)$ for each $i=0,\ldots,r$. In particular, as both $h$ and $h^*$ are piecewise affine, we deduce
\begin{align*}
h(x)&=h(\lambda_0\nu_0+\ldots+\lambda_r\nu_r)=\lambda_0 h(\nu_0)+\ldots+\lambda_r h(\nu_r)\\
&=\lambda_0 h^*(\nu_0)+\ldots+\lambda_r h^*(\nu_r)=h^*(\lambda_0\nu_0+\ldots+\lambda_r\nu_r)=h^*(x).
\end{align*}
Thus, $h=h^*$ on $\ol{\st}(\nu,\pol^*)$ for each $\nu\in\pol^*_{\bullet}\setminus(\sigma_1\cup\ldots\cup\sigma_s)$. To conclude, observe that as $\sigma_1,\ldots,\sigma_s$ are maximal simplices of $\pol$ and $\pol^*$ is a refinement of $\pol$, then $\pol^*\setminus(\ov{\sigma}_1\cup\ldots\cup\ov{\sigma}_s)$ is a subcomplex of $\pol^*$ and
$$
|\pol^*|\setminus(\ov{\sigma}_1\cup\ldots\cup\ov{\sigma}_s)=\bigcup_{\nu\in \pol^*_{\bullet}\setminus(\sigma_1\cup\ldots\cup\sigma_s)}\ol{\st}(\nu,\pol^*).
$$
Thus, $h=h^*$ on $|\pol^*|\setminus(\ov{\sigma}_1\cup\ldots\cup\ov{\sigma}_s)=|\pol|\setminus(\ov{\sigma}_1\cup\ldots\cup\ov{\sigma}_s)$, as required.

It only remains to show: \textit{$h:|\pol^*|=|\pol|\to|\Ll|$ is a simplicial map.} By (i) and (ii), we have
\begin{align*}
h(|\pol|)&=h((|(\pol|\setminus(\ov{\sigma}_1\cup\ldots\cup\ov{\sigma}_s))\cup (\sigma_1\cup\ldots\cup\sigma_s))\\
&= h(|\pol|\setminus(\ov{\sigma}_1\cup\ldots\cup\ov{\sigma}_s))\cup \bigcup_{k=1}^s h(\sigma_k)=h^*(|\pol|\setminus(\ov{\sigma}\cup\ldots\cup\ov{\sigma}_s))\cup\bigcup_{k=1}^s \tau_k\subset|\Ll|,
\end{align*}
so $h(|\pol^*|)=h(|\pol|)\subset|\Ll|$. Let now $\sigma$ be a simplex of $\pol^*$. If $\sigma\cap(\ov{\sigma}_1\cup\ldots\cup\ov{\sigma}_s)=\varnothing$, then $h(\sigma)\in\Ll$, because $h(\sigma)=h^*(\sigma)$ and $h^*(\sigma)\in\Ll$, because $h^*:|\pol^*|\to|\Ll|$ is a simplicial map. If $\sigma\cap\ov{\sigma}_k\neq\varnothing$ for some $k\in\{1,\ldots,s\}$, then $\sigma\subset\sigma_k$ because $\sigma_k$ is a maximal simplex of $\pol$ and $\pol^*$ is a refinement of $\pol$. Thus, by the definition of $h$, we have that $h(\nu)$ is a vertex of $\tau_k$ for each vertex $\nu$ of $\sigma$. In particular, if $\nu_0,\ldots,\nu_r$ are the vertices of $\sigma$, then
$$
h(\sigma)=h(\Conv(\{\nu_0,\ldots,\nu_r\}))=\Conv(\{h(\nu_0),\ldots,h(\nu_r)\})\subset\tau_k
$$
is a face of $\tau_k$, so $h(\sigma)\in\Ll$. We conclude that $h:|\pol^*|=|\pol|\to|\Ll|$ is a simplicial map, as required.

\noindent{\sc Step 7.} In order to conclude, it only remains to show: \textit{$h$ is surjective and satisfies 
\begin{equation}\label{star2}
f(\st(\nu,\pol^*))\subset\st^{(2)}(h(\nu),\Ll)
\end{equation}
for each vertex $\nu\in\pol^*_{\bullet}$.}  

By (i), we have $\tau_1,\ldots,\tau_s\subset h(|\pol^*|)$. Thus, 
$$
|\Ll|=\tau_1\cup\ldots\cup\tau_s\subset h(|\pol^*|)\subset|\Ll|,
$$
so $h$ is surjective.

Next, we show \eqref{star2}. Let $\nu\in \pol^*_{\bullet}$ be a vertex. If $\nu\not\in |\pol^*_{\bullet}|\cap(\sigma'_1\cup\ldots\cup\sigma'_s)$, then by the definition of $h$, we have $h(\nu)=h^*(\nu)$. Thus,
$$
f(\st(\nu,\pol^*))\subset \st(h^*(\nu),\Ll)=\st(h(\nu),\Ll)\subset\st^{(2)}(h(\nu),\Ll),
$$
because $h^*$ is a simplicial approximation of $f$. Assume now that $\nu\in \sigma'_k$ for some $k=1,\ldots,s$. Let $\omega_{0,k},\ldots,\omega_{d_k,k}$ be the vertices of $\tau_k$ and $j\in\{0,\ldots,d_k\}$. As $\tau_k\subset \ol{\st}(\omega_{j,k},\Ll)$, it follows $\{\omega_{0,k},\ldots,\omega_{d_k,k}\}\subset |\Ll_{\bullet}|\cap\ol{\st}(\omega_{j,k},\Ll)$, so
$$
\bigcup_{i=0}^{d_k}\st(\omega_{i,k},\Ll)\subset \bigcup_{\omega\in|\Ll_{\bullet}|\cap\ol{\st}(\omega_{j,k},\Ll)}\st(\omega,\Ll)=\st^{(2)}(\omega_{j,k},\Ll).
$$
Moreover, as $h^*:|\pol^*|\to|\Ll|$ is a simplicial map, $\nu$ is a vertex of $\sigma_k'\in\pol^*$ and $h^*(\sigma_k)\subset\tau_k$, we deduce that $h^*(\nu)$ is a vertex of $\tau_k$. In particular, $h^*(\nu)\in \{\omega_{0,k},\ldots,\omega_{d_k,k}\}$. Thus, as $h^*$ is a simplicial approximation of $f$, we have
$$
f(\st(\nu,\pol^*))\subset \st(h^*(\nu),\Ll)\subset\bigcup_{i=0}^{d_k}\st(\omega_{i,k},\Ll)\subset\st^{(2)}(\omega_{j,k},\Ll).
$$
As $h(\nu)$ is a vertex of $\tau_k$, then $h(\nu)=\omega_{\ell,k}$ for some $\ell\in\{0,\ldots,d_k\}$. We conclude
$$
f(\st(\nu,\pol^*))\subset\st^{(2)}(\omega_{\ell,k},\Ll)=\st^{(2)}(h(\nu),\Ll),
$$
as required.
\end{proof}

We are ready to show Proposition \ref{key3}.

\begin{proof}[Proof of Proposition \ref{key3}]
Let $\ell\geq 0$ be an integer such that $\diam(\tau)<\tfrac{\veps}{3}$ for each simplex $\tau\in\sd^{\ell}(\Ll)$ \cite[Thm.15.4]{mu}. By Proposition \ref{simpapprox}, there exists an integer $\kappa\geq 0$ and a surjective simplicial map $h:|\sd^{\kappa}(\pol)|\to|\sd^{\ell}(\Ll)|$ such that
\begin{equation}\label{star22}
f(\st(\nu,\sd^{\kappa}(\pol)))\subset\st^{(2)}(h(\nu),\sd^{\ell}(\Ll))
\end{equation}
for each vertex $\nu\in\sd^{\kappa}(\pol)_{\bullet}$. In order to lighten the notation we replace $\pol$ with $\sd^k(\pol)$ and $\Ll$ with $\sd^\ell(\Ll)$. Let $x\in|\pol|$. As
$$
|\pol|=\bigcup_{\nu\in\pol_{\bullet}}\st(\nu,\pol),
$$
there exists a vertex $\nu\in\pol_{\bullet}$ such that $x\in\st(\nu,\pol)$. We have,
$$
\dist(f(x),\ol{\st}(h(\nu),\Ll))\leq\max_{\tau\in\Ll}\{\diam(\tau)\})<\frac{\veps}{3},
$$
because $f(x)\in\st^{(2)}(h(\nu),\Ll)$. In particular, 
\begin{align*}
|f(x)-h(\nu)|_m&\leq \max_{z\in \ol{\st}(h(\nu),\Ll)}|h(\nu)-z|_m+\dist(f(x),\ol{\st}(h(\nu),\Ll))\\
&<\max_{\tau\in\Ll}\{\diam(\tau)\}+\frac{\veps}{3}=\frac{2\veps}{3}.
\end{align*}
As $h$ is a simplicial map, then $h(x)\in \st(h(\nu),\Ll)$. Thus, 
\begin{align*}
|f(x)-h(x)|_m&\leq|f(x)-h(\nu)|_m+|h(\nu)-h(x)|_m\\
&<\frac{2\veps}{3}+\max_{\tau\in\Ll}\{\diam(\tau)\}<\frac{2\veps}{3}+\frac{\veps}{3}=\veps.
\end{align*}
We conclude $\|f-h\|<\veps$, as desired.
\end{proof}

\section{Surjective definable approximation between polyhedra}

The purpose of this section is to show the following proposition. Together with Proposition \ref{key3} it will be the key ingredient for the proof of Theorem \ref{main}. The $\eps$-squeezing map of a finite simplicial complex is defined in \S\ref{squeezing} below. Roughly speaking the $\eps$\textit{-squeezing map} of a finite simplicial complex $\Ll$ is a surjective continuous definable map $\pi_{\eps}:|\Ll|\to|\Ll|$ that `squeezes' a small neighbourhood of the boundary of each maximal simplex of $\Ll$ onto its boundary and that is the identity map on each simplex of $\Ll$ that is not maximal.

\begin{prop}\label{key}
Let $\pol$ be a finite simplicial complex of $\R^n$, $\Ll$ a finite simplicial complex of $\R^m$ and $h:|\pol|\to|\Ll|$ a surjective simplicial map. Then, there exists $\eps>0$ with the following property: if $g:|\pol|\to |\Ll|$ is a continuous map such that $\|h-g\|<\eps$, then
\begin{itemize}
\item $\pi_{\eps}\circ g:|\pol|\to |\Ll|$ is surjective, where $\pi_{\eps}$ is the $\eps$-squeezing map of $\Ll$,
\item $\|h-\pi_{\eps}\circ g\|<2\displaystyle\max_{\tau\in\Ll}\{\diam(\tau)\}$.
\end{itemize}
\end{prop}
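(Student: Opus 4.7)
The norm bound will follow directly from the pointwise estimate $|\pi_\eps(y)-y|\leq\max_{\tau\in\Ll}\diam(\tau)$, so the substance of the proof is the surjectivity of $\pi_\eps\circ g$. My strategy is a Brouwer-degree argument, carried out on each maximal simplex of $\Ll$ separately: when $g$ is sufficiently close to $h$, a suitable orthogonal projection of $g|_{\sigma_k}$ has the same mapping degree as the affine homeomorphism $h|_{\sigma_k}$, which forces the image of $\pi_\eps\circ g$ to cover each maximal simplex of $\Ll$.

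\textbf{Preparation.} Since $h$ is simplicial and surjective, to every maximal simplex $\tau_k$ of $\Ll$ I can associate some simplex of $\pol$ whose $h$-image is exactly $\tau_k$; restricting to a face on which the vertex map is bijective onto the vertex set of $\tau_k$ furnishes a simplex $\sigma_k\in\pol$ with $\dim\sigma_k=\dim\tau_k$ and $h|_{\sigma_k}\colon\sigma_k\to\tau_k$ an affine homeomorphism. I then fix $\eps>0$ so small that the following hold simultaneously: (a) $\eps<\max_{\tau\in\Ll}\diam(\tau)$; (b) $\eps$ is strictly less than the Euclidean distance in $\R^m$ between any two disjoint closed simplices of $\Ll$; (c) the $\eps$-squeezing map $\pi_\eps$ is defined, and on every maximal simplex $\tau$ it sends the closed core $\{y\in\tau:\dist(y,\partial\tau)\geq\eps\}$ homeomorphically onto $\tau$; (d) $\eps$ is smaller than the distance from each isolated vertex of $|\Ll|$ to the rest of $|\Ll|$.

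\textbf{Surjectivity via degree.} Fix $g$ with $\|h-g\|<\eps$ and a maximal simplex $\tau=\tau_k$ with $d:=\dim\tau\geq 1$. Let $r\colon\R^m\to{\rm aff}(\tau)\cong\R^d$ denote orthogonal projection. Since $r$ is $1$-Lipschitz and fixes $\tau$, the map $r\circ g|_{\sigma_k}$ is uniformly $\eps$-close to the affine homeomorphism $h|_{\sigma_k}\colon\sigma_k\to\tau$. For $p\in\tau$ with $\dist(p,\partial\tau)>\eps$, the straight-line homotopy $(1-t)\,h|_{\sigma_k}+t\,r\circ g|_{\sigma_k}$ keeps $\partial\sigma_k$ inside the $\eps$-neighbourhood of $\partial\tau$, and thus avoids $p$. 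Homotopy invariance of degree then yields $\deg(r\circ g|_{\sigma_k},p)=\deg(h|_{\sigma_k},p)=\pm 1$, so some $x\in\sigma_k$ satisfies $r(g(x))=p$. Condition (b) together with $\dist(p,\partial\tau)>\eps$ ensures that no simplex of $|\Ll|$ other than $\tau$ meets the ball of radius $\eps$ around $p$, forcing $g(x)\in\tau$ and hence $g(x)=p$. Therefore the open core $\{y\in\tau:\dist(y,\partial\tau)>\eps\}$ is contained in $g(\sigma_k)$; applying $\pi_\eps$ (which by (c) sends this set onto ${\rm int}(\tau)$) and using compactness of $\pi_\eps(g(\sigma_k))$, we conclude $\tau\subset\pi_\eps(g(\sigma_k))$. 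The case $d=0$ is immediate from (d): for an isolated vertex $v\in|\Ll|$ and any $\nu\in|\pol|$ with $h(\nu)=v$, the point $g(\nu)\in|\Ll|\cap B_\eps(v)$ must equal $v$. Since $|\Ll|=\bigcup_k\tau_k$, $\pi_\eps\circ g$ is surjective.

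\textbf{Norm bound and main obstacle.} For each $x\in|\pol|$, $\pi_\eps(g(x))$ lies in the same closed simplex of $\Ll$ as $g(x)$, so $|\pi_\eps(g(x))-g(x)|\leq\max_{\tau\in\Ll}\diam(\tau)$; combined with $|g(x)-h(x)|<\eps<\max_{\tau\in\Ll}\diam(\tau)$, this yields $\|h-\pi_\eps\circ g\|<2\max_{\tau\in\Ll}\diam(\tau)$. I expect the main obstacle to be the simplicial bookkeeping in the surjectivity step, specifically ensuring that $g(x)$ stays inside $\tau$ rather than drifting into a neighbouring simplex of $|\Ll|$; the separation constant (b) is exactly what makes this possible.
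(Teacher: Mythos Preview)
Your overall architecture is close to the paper's: both isolate, for each maximal simplex $\tau_k$ of $\Ll$, a simplex $\sigma_k\in\pol$ on which $h$ is an affine homeomorphism onto $\tau_k$, and both use a degree/homology argument to show that a small perturbation of $h|_{\sigma_k}$ still covers the interior of $\tau_k$ after squeezing. The norm estimate is fine.

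The genuine gap is in the sentence ``Condition (b) together with $\dist(p,\partial\tau)>\eps$ ensures that no simplex of $|\Ll|$ other than $\tau$ meets the ball of radius $\eps$ around $p$, forcing $g(x)\in\tau$.'' Condition (b) only controls \emph{disjoint} pairs of simplices; it says nothing about a simplex $\tau'$ that shares a face with $\tau$. Such a $\tau'$ can ``fold back'' in $\R^m$ and pass arbitrarily close to interior points of $\tau$. Concretely, take $\tau=\Conv\{(0,0,0),(2,0,0),(0,2,0)\}$ and $\tau'=\Conv\{(0,0,0),(2,0,0),(2/3,2/3,1)\}$ in $\R^3$; every disjoint pair of simplices here is at distance $\geq 1$, so (b) allows $\eps=1/2$, yet the point $p=(0.6,0.55,0)$ satisfies $\dist(p,\partial\tau)>\eps$ while $y\approx(0.6,0.17,0.25)\in\tau'\setminus\tau$ lies within $0.46<\eps$ of $p$. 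A short computation shows this phenomenon persists for every $\eps>0$: with $p_2$ just above $\eps$ one gets $\dist(p,\tau'\setminus\tau)\approx 3p_2/\sqrt{13}<\eps$. So from $r(g(x))=p$ and $g(x)\in B_\eps(p)\cap|\Ll|$ you cannot conclude $g(x)\in\tau$, and hence cannot conclude $g(x)=p$.

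This is exactly the ``simplicial bookkeeping'' you flagged as the main obstacle, and the orthogonal projection $r$ is the wrong tool for it: $r$ can map points of neighbouring simplices into $\ov\tau$. The paper avoids this by replacing $r$ with a \emph{simplicial} retraction $\rho_\tau:\st(\tau,\Ll)\to\tau$ (built from barycentric coordinates, cf.\ \cite[Lem.~70.1]{mu}), whose crucial feature is $\rho_\tau(\st(\tau,\Ll)\setminus\ov\tau)\subset\partial\tau$. With this retraction the degree argument goes through and one can then strip off $\rho_\tau$ a posteriori, since $\rho_\tau(g(\sigma_\tau))\cap\ov\tau=g(\sigma_\tau)\cap\ov\tau$. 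If you substitute $\rho_\tau$ for your $r$ (and add the continuity of $(\rho_\tau)_*$ to the list of smallness conditions on $\eps$), your argument becomes essentially the paper's. A secondary point: your condition (c) is not quite the paper's $\pi_\eps$; there the core that is blown up to $\tau$ is the homothetic copy $\theta_{\tau,\eps}(\tau)$, not $\{y:\dist(y,\partial\tau)\geq\eps\}$, though for $\eps$ small enough the latter contains the former and your use of (c) survives.
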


\subsection{$\eps$-squeezing map}\label{squeezing}

We start by recalling the following well-known result, that we include here for the sake of completeness. The proof is taken from \cite[Lem.1.1]{fu} and adapted to our situation.

\begin{lem}\label{easyretraction}
Let $d\geq1$ be an integer and $\tau\subset \R^d$ a simplex of dimension $d$. Then, for each $z\in \ov{\tau}$ there exists a continuous definable retraction $\rho_z:\R^d\setminus\{z\}\to \partial \tau$.
\end{lem}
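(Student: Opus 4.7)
The plan is to define $\rho_z$ as the radial retraction from $z$ onto $\partial\tau$. Since $\tau$ is a $d$-dimensional simplex in $\R^d$, its (relative) interior $\ov{\tau}$ coincides with its topological interior, so $z$ is an interior point of the compact convex body $\tau$. For each $x \in \R^d \setminus \{z\}$, I would consider the ray $r_x := \{z + t(x - z) : t \geq 0\}$. Because $\tau$ is compact and contains $z$ in its interior, $r_x \cap \tau$ is a segment $[z, z + t(x)(x - z)]$, where
\[
t(x) := \max\{t \geq 0 : z + t(x - z) \in \tau\} > 0.
\]
I would then set $\rho_z(x) := z + t(x)(x - z) \in \partial\tau$.

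Next I would verify the retraction property: if $x \in \partial\tau$, convexity gives $[z, x] \subset \tau$, so $t(x) \geq 1$; conversely, the full line through $z$ and $x$ meets $\tau$ in a segment with $z$ in its interior, whose two endpoints lie on $\partial\tau$, and $x$ must coincide with the forward endpoint of this segment, forcing $t(x) = 1$ and hence $\rho_z(x) = x$. Definability is then immediate: the set $\{(x, t) \in (\R^d \setminus \{z\}) \times \R : t \geq 0,\ z + t(x - z) \in \tau\}$ is semialgebraic, hence definable in any o-minimal structure, and $t(x)$ is the maximum of its fibre over $x$, which is again definable. Consequently $\rho_z$ is definable.

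The main (routine but slightly subtle) step is continuity of $t$, which I would establish through the Minkowski functional $p(y) := \inf\{s > 0 : y/s \in \tau - z\}$ of the convex body $\tau - z$: by positive homogeneity of $p$ one has $t(x) = 1/p(x - z)$, and from inclusions of the form $B(0, c) \subset \tau - z \subset B(0, C)$ (valid for suitable $0 < c < C$ since $0$ is interior to $\tau - z$) one derives the bounds $|y|_d/C \leq p(y) \leq |y|_d/c$. A short convexity argument (subadditivity of $p$ combined with these bounds) shows $p$ is Lipschitz on $\R^d$ and strictly positive away from $0$, which yields continuity of $\rho_z$ on $\R^d \setminus \{z\}$ and completes the construction.
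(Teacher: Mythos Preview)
Your construction is correct and is the same radial retraction as in the paper: both define $\rho_z(x)$ as the unique point of $\partial\tau$ on the ray from $z$ through $x$. The only difference lies in how continuity (and definability) are verified. The paper writes $\tau=\bigcap_{i=0}^d\{h_i\geq 0\}$ for degree-one polynomials $h_i$ with $h_i(z)>0$, and computes directly
\[
\rho_z(x)=z+\frac{1}{\displaystyle\max_{i}\frac{h_i(z)-h_i(x)}{h_i(z)}}(x-z),
\]
which is an explicit semialgebraic formula and hence continuous and definable in one stroke. Your route instead separates the two issues: definability comes from the semialgebraic graph $\{(x,t): z+t(x-z)\in\tau\}$ and taking a fibrewise maximum, while continuity is obtained via the Minkowski functional of $\tau-z$. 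Both arguments are standard; the paper's explicit formula is slightly more economical, whereas your Minkowski-functional argument would apply verbatim to any compact convex body with nonempty interior, not just a simplex.
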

\begin{proof}
Let $z\in \ov{\tau}$. For each $x\in\R^d\setminus\{z\}$ let $\ell_x:=\{z+t(x-z) : t\in[0,+\infty)\}$ be the ray from $z$ that passes through $x$. As $\tau\subset\R^d$ has dimension $d$, then $\ell_x\cap \partial\tau\neq\varnothing$. Moreover, as $\tau$ is convex, by \cite[11.1.2.3, 11.1.2.7]{be}, $\ell_x\cap \partial\tau$ is a single point. Define $\rho: \R^d\setminus\{z\}\to \partial \tau$ as $\rho(x):=\ell_x\cap\partial\tau$. Observe that $\rho(x)=x$ for each $x\in\partial\tau$. Let $h_0,\ldots,h_d\in\R[\x_1,\ldots,\x_d]$ be polynomials of degree 1 such that the hyperplanes $H_i:=\{h_i=0\}$ contain the facets of $\tau$. Assume $\tau\subset\{h_i\geq 0\}$ for each $i=0,\ldots,d$. Note that $\rho(x)=z+\lambda(x-z)$, where $\lambda$ is the smallest value $\mu>0$ such that $h_i(z+\mu(x-z))=0$ for some $i=0,\ldots,d$. As $z\in\ov{\tau}$, we have $h_i(z)>0$ for $i=0,\ldots,d$. Thus, 
$$
\frac{1}{\lambda}=\max\Big\{\frac{h_i(z)-h_i(x)}{h_i(z)}:\ i=0,\ldots,d\Big\}>0.
$$
Consequently,
$$
\rho(x)=z+\frac{1}{\max\Big\{\frac{h_i(z)-h_i(x)}{h_i(z)}:\ i=0,\ldots,d\Big\}}(x-z),
$$
so $\rho:\R^n\setminus\{z\}\to\partial\tau$ is a continuous semialgebraic map such that $\rho|_{\partial\tau}=\id_{\partial\tau}$, that is, $\rho$ is a semialgebraic retraction. In particular, $\rho$ is a retraction that is definable in every o-minimal structure expanding the ordered field of real numbers $\R$.
\end{proof}

Let $\Ll$ be a finite simplicial complex of $\R^m$ and $\tau\in \Ll$ a simplex of dimension $d\geq 1$. Let $b_{\tau}$ be the barycentre of $\tau$ and $\delta_\tau:=\dist(b_{\tau},\partial\tau)$. Observe that, as $d\geq 1$, then $\delta_{\tau}>0$. Let $H_{\tau}$ be the affine space generated by $\tau$, that is, the smallest affine subspace $H_{\tau}$ of $ \R^m$ that contains $\tau$. For each $\eps\in(0,\delta_{\tau})$ denote 
\begin{equation}\label{homothety}
\theta_{\tau,\eps}: H_{\tau}\to H_{\tau}, \, x\mapsto b_{\tau}+\frac{\eps}{\delta_{\tau}}(x-b_{\tau})
\end{equation}
the homothety of $H_{\tau}$ of centre $b_{\tau}$ and radius $\eps/\delta_{\tau}$. Observe that $\theta_{\tau,\eps}(\tau)\subset\tau$, because $\eps/\delta_\tau<1$. As $\dim(\tau)=\dim(H_\tau)$, by Lemma \ref{easyretraction}, there exists a continuous definable retraction $\rho_{b_{\tau}}:H_{\tau}\setminus\{b_{\tau}\}\to \partial \tau$. Define 
\begin{equation}\label{pitau}
\pi_{\tau,\eps}:\tau\to\tau,\, x\mapsto
\begin{cases}
\theta^{-1}_{\tau,\eps}(x)=\theta_{\tau,\tfrac{1}{\eps}}(x), \text{ if } x\in \theta_{\tau,\eps}(\tau), \\
\rho_{b_\tau}(x), \text{ if } x\in\tau\setminus \theta_{\tau,\eps}(\ov{\tau}).
\end{cases}
\end{equation}
It is clear that $\pi_{\tau,\eps}$ is a continuous definable map such that $\pi_{\tau,\eps}(\tau)=\tau$ and the restriction $\pi_{\tau,\eps}|_{\partial\tau}$ to $\partial \tau$ is the identity map $\id_{\partial\tau}$ of $\partial\tau$.

Let $\Lambda_{\Ll}\subset\Ll$ be the family of maximal simplices of $\Ll$ of dimension $\geq 1$. Define
$$
\delta_{\Ll}:=\min_{\tau\in\Lambda_{\Ll}}\{\delta_{\tau}\}=\min_{\tau\in\Lambda_{\Ll}}\{\dist(b_{\tau},\partial\tau)\}.
$$
Then, for each $\tau\in\Lambda_{\Ll}$ and $\eps\in(0,\delta_{\Ll})$, the map $\pi_{\tau,\eps}:\tau\to \tau$ extends to a continuous definable map $\widehat{\pi}_{\tau,\eps}:|\Ll|\to |\Ll|$ that is the identity on $|\Ll|\setminus\ov{\tau}$. In fact, as $\tau$ is a maximal simplex of $\Ll$, then $\ov{\tau}$ is open in $|\Ll|$, so $|\Ll|\setminus \ov{\tau}$ is closed. As $\tau\cap(|\Ll|\setminus\ov{\tau})=\partial\tau$ and $\pi_{\tau,\eps}|_{\partial\tau}=\id_{\partial\tau}$, the map 
$$
\widehat{\pi}_{\tau,\eps}:|\Ll|\to|\Ll|,\, x\mapsto 
\begin{cases}
\pi_{\tau,\eps}(x), \text{ if } x\in \tau \\
x, \text{ if } x\in|\Ll|\setminus\ov{\tau},
\end{cases}
$$
is a well-defined continuous definable map. 

Let $\Lambda_{\Ll}=\{\tau_1,\ldots,\tau_s\}$ and $\eps\in(0,\delta_{\Ll})$. We are ready to give the definition of $\eps$-squeezing map of a finite simplicial complex $\Ll$.

\begin{defn}\label{defsqueezing}
The \textit{$\eps$-squeezing map of $\Ll$} is the continuous definable map 
$$
\pi_{\eps}:=\widehat{\pi}_{\tau_s,\eps}\circ\cdots\circ\widehat{\pi}_{\tau_1,\eps}:|\Ll|\to|\Ll|. 
$$
\end{defn}

Observe that the map $\pi_{\eps}$ depends on the simplicial complex $\Ll$ and not only on the polyhedron $|\Ll|$. Moreover, as $\widehat{\pi}_{\tau_i,\eps}\circ\widehat{\pi}_{\tau_j,\eps}=\widehat{\pi}_{\tau_j,\eps}\circ\widehat{\pi}_{\tau_i,\eps}$ for each $\tau_i,\tau_j\in\Lambda_{\Ll}$, the map $\pi_{\eps}$ does not depend on how we order the set $\Lambda_{\Ll}$, so it is completely determined by $\eps\in(0,\delta_{\Ll})$ and the simplicial complex $\Ll$.

We end this section with the following remark.

\begin{remark}\label{partialpi}
\textit{We have $\pi_\eps(\tau)=\tau$ for each $\tau\in\Ll$ and $\eps\in(0,\delta_{\Ll})$.} In fact, let $\tau\in\Ll$ and $\tau_j\in\Lambda_{\Ll}$. As $\tau_j$ is a maximal simplex of $\Ll$, then either $\tau=\tau_j$ or $\tau\subset |\Ll|\setminus\ov{\tau}_j$. In particular, $\widehat{\pi}_{\tau_j,\eps}(\tau)=\tau$ for each $\eps\in(0,\delta_{\Ll})$, because $\widehat{\pi}_{\tau_j,\eps}(\tau_j)=\tau_j$ and $\widehat{\pi}_{\tau_j,\eps}$ is the identity map on $|\Ll|\setminus\ov{\tau}_j$. As $\pi_{\eps}$ is the composition of the the maps $\widehat{\pi}_{\tau_1,\eps},\ldots,\widehat{\pi}_{\tau_s,\eps}$, we conclude that $\pi_{\eps}(\tau)=\tau$ for each $\eps\in(0,\delta_{\Ll})$. $\sqbullet$
\end{remark}

\subsection{Proof of Proposition \ref{key}} In this section we show Proposition \ref{key}. Let $\pi_{\tau,\eps}$ be the map introduced in \eqref{pitau}, we start with the following lemma.

\begin{lem}\label{prepkey}
Let $\tau\subset \R^d$ be a simplex of dimension $d\geq 1$ and $\delta_{\tau}:=\dist(b_{\tau},\partial\tau)>0$. Then, there exists $\eps^*\in(0,\delta_{\tau})$ with the following property: if $g:\tau\to\tau$ is a continuous map such that 
$$
\|\id_{\partial\tau}-g|_{\partial\tau}\|<\frac{1}{2}\delta_{\tau},
$$
then $\pi_{\tau,\eps}(g(\tau))=\tau$ for each $\eps\in(0,\eps^*)$. 
\end{lem}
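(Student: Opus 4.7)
The approach hinges on the observation that $\pi_{\tau,\eps}$ coincides with $\theta_{\tau,\eps}^{-1}$ on the ``core'' $\theta_{\tau,\eps}(\tau)$, so as soon as one has the inclusion $\theta_{\tau,\eps}(\tau) \subseteq g(\tau)$ it follows that
$$\pi_{\tau,\eps}(g(\tau)) \supseteq \pi_{\tau,\eps}(\theta_{\tau,\eps}(\tau)) = \theta_{\tau,\eps}^{-1}(\theta_{\tau,\eps}(\tau)) = \tau,$$
and equality is automatic from $\pi_{\tau,\eps}(\tau) \subseteq \tau$. Thus the whole lemma reduces to establishing this inclusion for sufficiently small $\eps$, which I plan to do by the classical ``no retraction of a ball onto its boundary'' argument: the assumption that $g|_{\partial \tau}$ is close to $\id_{\partial \tau}$ preserves enough of the boundary structure to force $g$ to cover the core.

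Concretely, set $M_{\tau} := \max_{x \in \tau}|x-b_{\tau}|_{d}$ and $\eps^{*} := \delta_{\tau}^{2}/(2M_{\tau}) \in (0,\delta_{\tau})$. For any $\eps \in (0,\eps^{*})$ and any $z \in \theta_{\tau,\eps}(\tau)$ one has $|z-b_{\tau}|_{d} \leq (\eps/\delta_{\tau})M_{\tau} < \delta_{\tau}/2$, so $\dist(z,\partial \tau) > \delta_{\tau}/2$. On the other hand, by convexity of $\tau$ the straight-line homotopy $H(x,t) := (1-t)x + t g(x)$ on $\partial \tau \times [0,1]$ takes values in $\tau$, and $|H(x,t)-x|_{d} \leq \|\id_{\partial \tau} - g|_{\partial \tau}\| < \delta_{\tau}/2$ gives $\dist(H(x,t),\partial \tau) < \delta_{\tau}/2$. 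Comparing the two estimates, $H(\partial \tau \times [0,1]) \cap \theta_{\tau,\eps}(\tau) = \varnothing$.

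Now pick any $y \in \theta_{\tau,\eps}(\tau)$ and assume for contradiction $y \notin g(\tau)$. Since $\dim \tau = d$, the affine hull $H_{\tau}$ is $\R^{d}$, so Lemma \ref{easyretraction} provides a continuous definable retraction $\rho_{y} : \R^{d} \setminus \{y\} \to \partial \tau$. The homotopy $H$ avoids $y$ by the previous step, hence $\rho_{y} \circ H$ is a homotopy in $\partial \tau$ between $\id_{\partial \tau}$ and $\rho_{y} \circ g|_{\partial \tau}$; but $\rho_{y} \circ g : \tau \to \partial \tau$ extends $\rho_{y} \circ g|_{\partial \tau}$ continuously to the contractible simplex $\tau$, forcing that restriction, and hence also $\id_{\partial \tau}$, to be nullhomotopic on $\partial \tau \cong S^{d-1}$ --- impossible for every $d \geq 1$. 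Thus $\theta_{\tau,\eps}(\tau) \subseteq g(\tau)$ and the lemma follows. The only genuinely delicate point will be the calibration of $\eps^{*}$: the factor $M_{\tau}/\delta_{\tau}$ in the denominator is forced because the homothety ratio $\eps/\delta_{\tau}$ is gauged against $\delta_{\tau}$ while $\tau$ itself reaches up to distance $M_{\tau}$ from $b_{\tau}$, so any attempt to pick $\eps^{*}$ depending only on $\delta_{\tau}$ would fail to separate the core from the homotopy track.
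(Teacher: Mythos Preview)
Your proof is correct and shares the same topological core as the paper's---the ``no retraction of a ball onto its boundary'' argument applied after showing the homothetic core $\theta_{\tau,\eps}(\tau)$ lies in $g(\tau)$---but your execution is more direct. The paper first composes $g|_{\partial\tau}$ with the retraction $\rho_{z_\eps}$ to obtain $\widehat g_\eps:\partial\tau\to\partial\tau$, then builds a homotopy from $\id_{\partial\tau}$ to $\widehat g_\eps$ by projecting the segments $[x,\widehat g_\eps(x)]$ via $\rho_{b_\tau}$; this forces a longer chain of triangle inequalities to exclude $b_\tau$ from those segments, and the contradiction is phrased via $H_d$. You instead take the straight-line homotopy $H(x,t)=(1-t)x+tg(x)$ inside $\tau$, observe by a single distance estimate that its track stays within $\delta_\tau/2$ of $\partial\tau$ while any point of $\theta_{\tau,\eps}(\tau)$ is farther than $\delta_\tau/2$ from $\partial\tau$, and then compose with $\rho_y$ only at the end. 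Both yield the same threshold $\eps^*=\delta_\tau^2/(2M_\tau)$ up to the harmless replacement of $M_\tau=\max_{x\in\tau}|x-b_\tau|$ by $\diam(\tau)$ in the paper; your choice gives a slightly sharper (larger) $\eps^*$, and your argument avoids the auxiliary retraction $\rho_{b_\tau}$ entirely.
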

\begin{proof}
Let $\eps\in(0,\delta_{\tau})$ and let $\theta_{\eps}:\R^d\to\R^d$ be the homothety of centre $b_{\tau}$ and radius $\eps/\delta_{\tau}$ (see \eqref{homothety}). As $\pi_{\tau,\eps}(\theta_{\eps}(\tau))=\tau$, it is enough to show that there exists $\eps^*\in(0,\delta_{\tau})$ such that $\theta_{\eps}(\tau)\subset g(\tau)$ for each $0<\eps<\eps^*$. Suppose, by the way of contradiction, that for all $\eps\in (0,\delta_{\tau})$ there exists a point $z_{\eps}\in \theta_{\eps}(\tau)\setminus g(\tau)$. As $z_{\eps}\in \theta_{\eps}(\tau)\subset\ov{\tau}$, by Lemma \ref{easyretraction}, there exists a (continuous) retraction $\rho_{z_{\eps}}:\R^d\setminus\{z_{\eps}\}\to \partial\tau$. We claim: \textit{There exists $\eps^*\in(0,\delta_{\tau})$ such that the (continuous) map $\widehat{g}_{\eps}:=\rho_{z_{\eps}}\circ g|_{\partial\tau}:\partial\tau\to\partial\tau$ is homotopic to the identity map $\id_{\partial\tau}$ of $\partial\tau$ for all $0<\eps<\eps^*$.} 

For each $x\in \partial\tau$ let
$$
\s_{x,\eps}:=\{t\widehat{g}_{\eps}(x)+(1-t)x : 0\leq t\leq 1\}
$$
be the segment between $x$ and $\widehat{g}_{\eps}(x)$. Let $\rho_{b_{\tau}}:\R^d\setminus\{b_{\tau}\}\to \partial \tau$ be the (continuous) retraction relative to the barycentre $b_{\tau}$ of $\tau$ (see Lemma \ref{easyretraction}). If $b_{\tau}\not\in\s_{x,\eps}$ for each $x\in \partial\tau$, then the map 
$$
\partial\tau\times[0,1]\to\partial\tau, \, (x,t)\mapsto \rho_{b_{\tau}}(t\widehat{g}_{\eps}(x)+(1-t)x)
$$
is a well-defined homotopy between the identity map $\id_{\partial\tau}$ and $\widehat{g}_{\eps}$. If there exists $x\in \partial\tau$ such that $b_{\tau}\in \s_{x,\eps}$, then
\begin{align*}
\delta_{\tau}+\dist(b_\tau,\widehat{g}_\eps(x)) &= \min_{y\in\partial\tau}\{\dist(y,b_\tau)\}+\dist(b_\tau,\widehat{g}_\eps(x))\\
&\leq \dist(x,b_{\tau})+\dist(b_\tau,\widehat{g}_\eps(x))=\dist(x,\widehat{g}_{\eps}(x))\\
&\leq \dist(x,g(x))+\dist(g(x),\widehat{g}_{\eps}(x))< \frac{\delta_{\tau}}{2}+\dist(z_{\eps},\widehat{g}_{\eps}(x))\\
&\leq \frac{\delta_{\tau}}{2}+\dist(z_{\eps},b_{\tau})+\dist(b_{\tau},\widehat{g}_{\eps}(x))\\
&\leq \frac{\delta_{\tau}}{2}+\frac{\eps}{\delta_{\tau}}\diam(\tau)+\dist(b_\tau,\widehat{g}_\eps(x))
\end{align*}
The inequality $\dist(g(x),\widehat{g}_{\eps}(x))\leq \dist(z_{\eps},\widehat{g}_{\eps}(x))$ holds because $g(x)$ belongs to the segment between $z_{\eps}$ and $\widehat{g}_{\eps}(x)$. We deduce
\begin{equation}\label{ineq}
\delta_\tau<\frac{\delta_\tau}{2}+\frac{\eps}{\delta_{\tau}}\diam(\tau)
\end{equation}
Define the number
$$
\eps^*:=\frac{\delta_{\tau}^2}{2\diam(\tau)}>0
$$
and observe that, as $\delta_\tau<\diam(\tau)$, then $\eps^*<\delta_\tau$. If $\eps<\eps^*$, the inequality \eqref{ineq} becomes  
$$
\delta_{\tau}\leq\frac{\delta_\tau}{2}+\frac{\eps}{\delta_{\tau}}\diam(\tau)<\delta_{\tau},
$$
which leads to a contradiction. Thus, if $\eps<\eps^*$, then $b_{\tau}\not\in \s_{x,\eps}$ for each $x\in \partial \tau$, so $\widehat{g}_{\eps}$ is homotopic to the identity map $\id_{\partial\tau}$, as claimed.

The map $\widehat{g}_{\eps}$ is the restriction to $\partial\tau$ of the continuous map $\rho_{z_{\eps}}\circ g:\tau\to\partial\tau$. Thus, the map induced by $\widehat{g}_{\eps}$ on homology
$$
(\widehat{g}_{\eps})_*:H_d(\partial\tau,\Z)\simeq\Z\to H_d(\partial\tau,\Z)\simeq\Z
$$
is zero, because it factors through $H_d(\tau,\Z)=0$. If $\eps<\eps^*$ this leads to a contradiction. In fact,  $(\widehat{g}_{\eps})_*=\id_{H_d(\partial\tau,\Z)}$ because $\widehat{g}_{\eps}$ is homotopic to the identity map $\id_{\partial\tau}$. In particular, if $\eps<\eps^*$, then $\theta_{\eps}(\tau)\subset g(\tau)$. We deduce $\tau=\pi_{\tau,\eps}(\theta_{\eps}(\tau))\subset\pi_{\tau,\eps}(g(\tau))\subset\tau$, so $\pi_{\tau,\eps}(g(\tau))=\tau$, as required.
\end{proof}

We are ready to show Proposition \ref{key}.

\begin{proof}[Proof of Proposition \ref{key}]
The proof is conducted in three steps.

\noindent{\sc Step 1. Initial reduction.} Let $g:|\pol|\to|\Ll|$ be a continuous map such that $\|h-g\|<\eps$. As $\pi_{\eps}(\tau)=\tau$ for each $\tau\in \Ll$ (see Remark \ref{partialpi}), we have
$$
\|h-\pi_{\eps}\circ g\|\leq \|h-g\|+\|g-\pi_{\eps}\circ g\|< \eps+\max_{\tau\in\Ll}\{\diam(\tau)\}
$$
In particular, if $\eps<\max\{\diam(\tau) : \tau\in\Ll\}$, then $\|h-\pi_{\eps}\circ g\|< 2\max\{\diam(\tau): \tau\in\Ll\}$.

As $|\Ll|$ has finitely many connected components, we may assume that $|\Ll|$ is connected. If $\tau\in \Ll_{\bullet}$ is a vertex that is also a maximal simplex of $\Ll$, then $\tau$ is an isolated point of $|\Ll|$, so $|\Ll|=\tau$, because $|\Ll|$ is connected. In particular, we may also assume that all the maximal simplices of $\Ll$ have dimension $\geq 1$.

Let $\tau\in\Ll$ be a simplex that is not a maximal simplex of $\Ll$. Then, there exists a maximal simplex $\tau'\in\Ll$ such that $\tau\subset\tau'$. In particular, if $\tau'\subset\pi_{\eps}(g(|\pol|))$, then $\tau\subset\tau'\subset \pi_{\eps}(g(|\pol|))$. Thus, in order to conclude we are reduced to show: \textit{There exists $\eps>0$ such that if $\|h-g\|<\eps$, then $\tau\subset\pi_{\eps}(g(|\pol|))$ for each maximal simplex $\tau\in \Ll$}.

\noindent{\sc Step 2. Choice of $\eps$.} Let $\Lambda_{\Ll}$ be the family of maximal simplices of $\Ll$ and let $\tau\in\Lambda_{\Ll}$. We may assume that $\tau$ is not the only maximal simplex of $\Ll$, because in the case where $\tau$ is the only maximal simplex of $\Ll$ the proof is similar but easier (in what follows it is enough to take $U_\tau:=\tau$, $\rho_\tau=\id_\tau$ and $\eps_1=1$). By {\sc Step 1}, the polyhedron $|\Ll|$ is connected, so
$$
\partial\st(\tau,\Ll):=\ol{\st}(\tau,\Ll)\setminus\st(\tau,\Ll)\neq\varnothing,
$$
where $\st(\tau,\Ll)$ is the (open) star of $\tau$ (see \eqref{starsimp}). By \cite[Lem.70.1]{mu}, there exists a (continuous) retraction $\rho'_\tau:\st(\tau,\Ll)\to \tau$. As $\tau$ and $\partial\st(\tau,\Ll)$ are compact sets, $\tau\cap\partial\st(\tau,\Ll)=\varnothing$ and $\partial\st(\tau,\Ll)\neq\varnothing$, we have
$$
\eps_1:=\min_{\tau\in\Lambda_{\Ll}}\{\dist(\tau,\partial\st(\tau,\Ll))\}>0.
$$
For each $\tau\in\Lambda_{\Ll}$ the set
$
U_{\tau}:=\{x\in |\Ll| : \dist(x,\tau)<\eps_1\}
$
is an open neighbourhood of $\tau$ in $|\Ll|$ such that $U_{\tau}\subset\st(\tau,\Ll)$. In particular, the restriction $\rho_{\tau}:=\rho'_{\tau}|_{U_{\tau}}:U_{\tau}\to\tau$ of the retraction $\rho'_{\tau}$ to $U_{\tau}$ is well-defined for each $\tau\in\Lambda_{\tau}$.

Let $\tau\in\Lambda_{\Ll}$. As $h$ is a surjective simplicial map, there exists a simplex $\sigma\in\pol$ such that $h(\sigma)=\tau$. As $h|_{\sigma}$ is affine and maps the vertices of $\sigma$ onto the vertices of $\tau$, there exists a face $\sigma_{\tau}$ of $\sigma$ such that $h(\sigma_{\tau})=\tau$ and $\dim(\sigma_\tau)=\dim(\tau)$. In particular, $h|_{\sigma_\tau}:\sigma_{\tau}\to\tau$ is an (affine) homeomorphism. Let $g:|\pol|\to|\Ll|$ be a continuous map. If $\|h-g\|<\eps_1$, then $g(\sigma_{\tau})\subset U_{\tau}$, so the map $\rho_{\tau}\circ g|_{\sigma_{\tau}}:\sigma_{\tau}\to\tau$ is continuous and well-defined. 

For each simplex $\tau\in\Lambda_{\Ll}$, let $b_{\tau}$ be its barycentre. We define
$$
\delta_{\Ll}:=\min_{\tau\in\Lambda_{\Ll}}\{\dist(b_{\tau},\partial\tau)\}.
$$
Observe that, as $\dim(\tau)\geq 1$ for each $\tau\in\Lambda_{\Ll}$, then $\delta_{\Ll}>0$. The map 
$$
(\rho_{\tau})_*:\Cont^0(\tau,U_{\tau})\to \Cont^0(\tau,\tau),\, f\mapsto \rho_{\tau}\circ f
$$
is continuous with respect to the compact-open topology. Thus, there exists $\eps_{\tau}>0$ such that if $f\in \Cont^0(\tau,U_{\tau})$ is a continuous map such that $\|\id_{\tau}-f\|<\eps_{\tau}$, then 
$$
\|\id_{\tau}-\rho_{\tau}\circ f\|=\|\rho_{\tau}\circ \id_{\tau}-\rho_{\tau}\circ f\|<\frac{1}{2}\delta_{\Ll}.
$$
We define
$$
\eps_2:=\min_{\tau\in \Lambda_{\Ll}}\{\eps_{\tau}\}>0
$$

Let $g:|\pol|\to|\Ll|$ be a continuous map such that $\|h-g\|<\min\{\eps_1,\eps_2\}$ and let $\tau\in\Lambda_{\tau}$. As $h|_{\sigma_{\tau}}:\sigma_{\tau}\to\tau$ is a homeomorphism, then
$$
\|\id_{\tau}-g\circ h|_{\sigma_{\tau}}^{-1}\|=\|h|_{\sigma_{\tau}}\circ h|_{\sigma_{\tau}}^{-1}-g\circ h|_{\sigma_{\tau}}^{-1}\|=\|h|_{\sigma_{\tau}}-g|_{\sigma_{\tau}}\|\leq\|h-g\|<\min\{\eps_1,\eps_2\}.
$$
In particular, $g\circ h|_{\sigma_{\tau}}^{-1}:\tau\to U_{\tau}$, so the composition $\rho_{\tau}\circ g\circ h|_{\sigma_{\tau}}^{-1}:\tau\to\tau$ is well-defined. Moreover, as $\min\{\eps_1,\eps_2\}\leq \eps_{\tau}$, we have
\begin{align*}
\|\id_{\partial\tau}-\rho_{\tau}\circ g\circ h|_{\partial\sigma_{\tau}}^{-1}\|&\leq \|\id_{\tau}-\rho_{\tau}\circ g\circ h|_{\sigma_{\tau}}^{-1}\|=\|\rho_{\tau}\circ \id_{\tau}-\rho_{\tau}\circ g\circ h|_{\sigma_{\tau}}^{-1}\|\\
&<\frac{1}{2}\delta_{\Ll}\leq\frac{1}{2} \dist(b_{\tau},\partial\tau).
\end{align*}
By Lemma \ref{prepkey}, there exists $\eps^*_{\tau}\in(0,\delta_\tau)$ such that $
\pi_{\tau,\eps}(\rho_{\tau}(g(h|_{\sigma_{\tau}}^{-1}(\tau))))=\tau
$
for each $0<\eps<\eps^*_{\tau}$ (see \eqref{pitau} for the definition of the map $\pi_{\tau,\eps}$). We define
$$
\eps_3:=\frac{1}{2}\min_{\tau\in\Lambda_{\Ll}}\{\eps^*_{\tau}\}\in(0,\delta_{\Ll}).
$$
Let $0<\eps\leq\eps_3$ and let $\pi_{\eps}:|\Ll|\to|\Ll|$ be the $\eps$-squeezing map of $\Ll$. Thus, 
\begin{equation}\label{surj1}
\pi_{\eps}(\rho_{\tau}(g(\sigma_{\tau})))=\pi_{\eps}(\rho_{\tau}(g(h|_{\sigma_{\tau}}^{-1}(\tau))))=\pi_{\tau,\eps}(\rho_{\tau}(g(h|_{\sigma_{\tau}}^{-1}(\tau))))=\tau,
\end{equation}
because  $\rho_{\tau}(g(h|_{\sigma_{\tau}}^{-1}(\tau)))\subset\tau$, $\pi_{\eps}|_{\tau}=\pi_{\tau,\eps}$ and $\eps\leq \eps_3<\eps^*_{\tau}$. We define
$$
\eps:=\min\{\eps_1,\eps_2,\eps_3\}>0.
$$

\noindent{\sc Step 3. Conclusion.} By {\sc Step 1}, up to taking a smaller $\eps$ if needed, in order to conclude it is enough to show: \textit{$\tau\subset\pi_{\eps}(g(|\pol|))$ for each $\tau\in\Lambda_{\Ll}$.} 

Let $\tau\in\Lambda_{\Ll}$. As $\tau$ is a maximal simplex, $\tau'\cap\ov{\tau}=\varnothing$ for each simplex $\tau'\in\Ll$ such that $\tau'\neq \tau$. Let $x\in U_{\tau}\setminus\ov{\tau}$ and let $\tau'\in\Ll$ be a simplex such that $\tau'\neq\tau$ and $x\in\tau'$. As $U_\tau\setminus\ov{\tau}\subset\st(\tau,\Ll)\setminus\ov{\tau}$, by \cite[Lem.70.1]{mu} and its proof, we have $\rho_\tau(x)\in\tau'$. In particular, $\rho_{\tau}(x)\in\tau'\cap\tau$. As $\tau'\cap\ov{\tau}=\varnothing$, it follows $\tau'\cap\tau\subset\partial\tau$, so $\rho_{\tau}(x)\in\partial\tau$. We deduce, $\rho_{\tau}(U_{\tau}\setminus\ov{\tau})\subset\partial\tau$. As $\rho_{\tau}|_{\tau}=\id_{\tau}$, we have
\begin{align}\label{1psi}
\begin{split}
g(\sigma_{\tau})\cap\ov{\tau}&=\rho_{\tau}(g(\sigma_\tau)\cap\ov{\tau})\subset\rho_{\tau}(g(\sigma_\tau))\cap\rho_{\tau}(\ov{\tau})
= \rho_{\tau}(g(\sigma_\tau))\cap\ov{\tau} \\
&=(\rho_{\tau}(g(\sigma_\tau)\cap\ov{\tau})\cup\rho_{\tau}(g(\sigma_\tau)\cap(U_{\tau}\setminus\ov\tau))\cap\ov{\tau}\\
&\subset(\rho_{\tau}(g(\sigma_\tau)\cap\ov{\tau})\cup\partial\tau)\cap\ov{\tau}=\rho_{\tau}(g(\sigma_\tau)\cap\ov{\tau})\cap\ov{\tau}\\
&=(g(\sigma_\tau)\cap\ov{\tau})\cap\ov{\tau}=g(\sigma_\tau)\cap\ov{\tau}.
\end{split}
\end{align}
In particular, $g(\sigma_{\tau})\cap\ov{\tau}=\rho_{\tau}(g(\sigma_\tau))\cap\ov{\tau}$. As $\pi_{\eps}(\tau')=\tau'$ for each simplex $\tau'\in\Ll$ (see Remark \ref{partialpi}), it follows that $\pi_{\eps}(\partial\tau)=\partial\tau$. As $\|g-h\|<\eps<\eps_3$, by \eqref{surj1}, we have $\pi_{\eps}(\rho_{\tau}(g(\sigma_{\tau})))=\tau$. Thus, by \eqref{1psi}, and by the fact that $\rho_{\tau}(g(\sigma_\tau))\subset\tau$, we deduce,
\begin{align*}
\ov{\tau}&=\tau\cap\ov{\tau}=\pi_{\eps}(\rho_{\tau}(g(\sigma_{\tau})))\cap\ov{\tau}
=(\pi_\eps(\rho_{\tau}(g(\sigma_{\tau}))\cap\ov{\tau})\cup\pi_\eps(\rho_{\tau}(g(\sigma_{\tau}))\cap\partial\tau))\cap\ov{\tau}\\
&\subset (\pi_\eps(\rho_{\tau}(g(\sigma_{\tau}))\cap\ov{\tau})\cup\partial\tau)\cap\ov{\tau}
=\pi_\eps(\rho_{\tau}(g(\sigma_{\tau}))\cap\ov{\tau})\cap\ov{\tau}
\\
&=\pi_\eps(g(\sigma_{\tau})\cap \ov{\tau})\cap\ov{\tau}\subset\pi_\eps(g(\sigma_{\tau}))\cap\ov{\tau}\subset\pi_\eps(g(\sigma_{\tau})).
\end{align*}
As $\ov{\tau}\subset \pi_{\eps}(g(\sigma_{\tau}))$ and $\pi_{\eps}(g(\sigma_{\tau}))$ is compact, we conclude
$$
\tau=\cl(\ov{\tau})\subset\cl(\pi_{\eps}(g(\sigma_{\tau})))=\pi_{\eps}(g(\sigma_{\tau})),
$$
as required.
\end{proof}


\section{Differentiable approximation of continuous definable maps}

We make use of Proposition \ref{key3} and Proposition \ref{key} to deduce Theorem \ref{main} as a consequence of Paw\l{}ucki's desingularization (see Theorem \ref{pawudesing}).

\begin{proof}[Proof of Theorem \ref{main}]

Fix $\veps>0$ and an integer $p\geq1$. As $X$ is compact and $f$ a continuous definable map, the set $Y:=f(X)\subset\R^m$ is a compact definable set. By Paw\l{}ucki's desingularization (see Theorem \ref{pawudesing}), there exists a definable triangulation $(\Ll, \psi)$ of $Y$ in $\R^m$ such that $\psi:|\Ll|\to Y$ is a definable homeomorphism of class $\Cont^p$. The map $\psi_*:\Cont^0(X,|\Ll|)\to \Cont^0(X,Y), G\mapsto \psi\circ G$ is continuous with respect to the compact-open topology. Thus, there exists $\delta_0>0$ such that if $g\in \Cont^0(X,|\Ll|)$ is a definable map of class $\Cont^p$ such that $\|\psi^{-1}\circ f-g\|<\delta_0$, then the definable map $\psi\circ g: X\to Y$ satisfies
$$
\|f-\psi\circ g\|=\|\psi\circ\psi^{-1}\circ f-\psi\circ g\|<\veps.
$$
In particular, up to substituting $f$ with $\psi^{-1}\circ f$, we may assume: \textit{$Y=|\Ll|$ is the (underlying) polyhedron of a finite simplicial complex $\Ll$ of $\R^m$.} Moreover, by \cite[Thm.15.4]{mu}, there exists an integer $\ell\geq 0$ such that $\diam(\tau)<\tfrac{\veps}{6}$ for each $\tau\in\sd^\ell(\Ll)$. Thus, up to substituting $\Ll$ with $\sd^\ell(\Ll)$, we may also assume assume: \textit{$\diam(\tau)<\tfrac{\veps}{6}$ for each $\tau\in\Ll$.}

By \cite[Cor.8.3.9]{vD}, there exists an open definable neighbourhood $V\subset\R^m$ of $|\Ll|$ and a definable retraction $\rho:\cl(V)\to |\Ll|$. As $|\Ll|$ is compact, up to shrinking $V$ if necessary, we may assume that  $V=\{x\in \R^m : d(x,|\Ll|)<\mu_0\}$ for some $\mu_0>0$. By the Tietze extension theorem for definable maps \cite[Cor.8.3.10]{vD}, the continuous definable map $f:X\to |\Ll|$ can be extended to a continuous definable map $\widehat{f}:\R^n\to \R^m$. We claim: \textit{There exists a compact polyhedron $Q\subset \R^n$ such that $X\subset \Int(Q)$ and $\widehat{f}(Q)\subset V$.} 

As $X$ is compact, there exists $R>0$ such that $X\subset (-R,R)^n$. Let 
$$
U:=\widehat{f}^{-1}(V)\cap (-R,R)^n.
$$
As $X$ is compact, $U$ is open and $X\subset U$, then $\nu:=\dist(X,\partial U)>0$. In particular, the set
$$
U':=\{x\in \R^n : d(x,X)<\nu\}
$$
is an open definable neighbourhood of $X$ in $U$. Consider a triangulation $\Qq_0$ of $[-R,R]^n$ such that $\diam(\sigma)<\nu$ for each $\sigma\in \Qq_0$. Define the compact polyhedron 
$$
Q:=\bigcup\{\sigma\in \Qq_0 : \sigma\subset U'\}.
$$ 
Observe that, as $Q\subset\widehat{f}^{-1}(V)$, then $\widehat{f}(Q)\subset V$. Let $x\in X$ and $B\subset \R^n$ the open ball of centre $x$ and radius $\tfrac{\nu}{2}$. Let $\{\sigma_1,\ldots,\sigma_k\}$ be the set of simplices of $\Qq_0$ such that $B\cap \sigma_i\neq \varnothing$. For each $i=1,\ldots,k$ and $y\in \sigma_i$ we have $d(y,X)\leq d(y,x)<\nu$. In particular, $\sigma_i\subset Q$ for each $i=1,\ldots,k$. As $B\subset [-R,R]^n$, then
$$
B=\bigcup_{\sigma\in\Qq_0}(B\cap \sigma)=\bigcup_{i=1}^k(B\cap \sigma_i)\subset Q,
$$ 
so $x\in\Int(Q)$. We conclude that $X\subset \Int(Q)$, as desired. 

In order to lighten the notation, we replace $\widehat{f}$ by $\widehat{f}|_{Q}$. As $\widehat{f}(Q)\subset V$, the definable map $F:=\rho\circ \widehat{f}:Q\to |\Ll|$ is continuous and well-defined. By \cite[Thm.8.2.9]{vD}, there exists a definable triangulation $(\pol,\varphi)$ of $Q$ compatible with $X$. As the triangulation $(\pol,\varphi)$ is compatible with $X$ and $X$ is compact, the set $\Tt:=\{\sigma\in\pol : \sigma\subset\varphi^{-1}(X)\}$ is a subcomplex of $\pol$. The map 
$$
\rho_*:\Cont^0(|\pol|,V)\to\Cont^0(|\pol|,|\Ll|),\, G\mapsto \rho\circ G
$$ 
is continuous with respect to the compact-open topology, thus, there exists $\delta'_1>0$ such that if $h\in\Cont^0(|\pol|,V)$ is a continuous map such that $\|\widehat{f}\circ \varphi-h\|<\delta'_1$, then
$$
\|F\circ\varphi-\rho\circ h\|=\|\rho\circ \widehat{f}\circ \varphi-\rho\circ h\|<\frac{\veps}{3}.
$$
As $|\pol|$ is compact and $\widehat{f}(\varphi(|\pol|))=\widehat{f}(Q)\subset V$, then $\mu_1:=\dist(\widehat{f}(\varphi(|\pol|),\partial V)>0$. We define $\delta_1:=\min\{\delta_1',\mu_1\}$. As 
$$
\widehat{f}(\varphi(|\Tt|))=\widehat{f}(X)=f(X)=|\Ll|,
$$
by Proposition \ref{key3}, there exists two integers $\kappa,\ell\geq 0$ and a surjective simplicial map $h:|\sd^\kappa(\Tt)|=|\Tt|\to|\sd^{\ell}(\Ll)|$ such that $\|\widehat{f}\circ\varphi|_{|\Tt|}-h\|<\tfrac{\delta_1}{2}$. We claim: \textit{There exists a continuous definable extension $\widehat{h}:|\pol|\to V$ of $h$ such that $\|\widehat{f}\circ\varphi-\widehat{h}\|<\delta_1$.}

If $\widehat{h}:|\pol|\to\R^m$ is a continuous definable map such that $\|\widehat{f}\circ\varphi-\widehat{h}\|<\delta_1$, then 
$$
|\widehat{f}(\varphi(x))-\widehat{h}(x)|_m<\delta_1\leq\mu_1
$$ 
for each $x\in |\pol|$. In particular $\widehat{h}(|\pol|)\subset V$. As $|\pol|$ is compact, the map $\widehat{f}\circ \varphi$ is uniformly continuous. In particular, there exists $\delta^*>0$ such that if $x,y\in|\pol|$ are such that $|x-y|_n<\delta^*$, then 
$$
|\widehat{f}(\varphi(x))-\widehat{f}(\varphi(y))|_m<\frac{\delta_1}{2}.
$$
By \cite[Cor.8.3.9]{vD}, there exists an open definable neighbourhood $W\subset|\pol|$ of $|\Tt|$ and a definable retraction $\xi:\cl(W)\to |\Tt|$. Observe that as $|\pol|$ is compact, also $\cl(W)$ is compact. The set $W':=\{x\in W : |x-\xi(x)|_n<\tfrac{\delta^*}{2}\}$ is an open definable neighbourhood of $|\Tt|$ in $|\pol|$. Thus, up to replacing $W$ with $W'$, we may assume $|x-\xi(x)|_n<\delta^*$ for each $x\in \cl(W)$. Let $\{\theta_1,\theta_2\}$ be a definable continuous partition of unity subordinated to the open covering $\{W, |\pol|\setminus |\Tt|\}$ of $|\pol|$. For each $x\in|\pol|$ define 
$$
\widehat{h}(x):=\theta_1(x)h(\xi(x))+\theta_2(x)\widehat{f}(\varphi(x)).
$$
Observe first that, as $\theta_1\equiv 0$ on $|\pol|\setminus W$, then $\widehat{h}:|\pol|\to\R^m$ is a well-defined continuous definable map. Let $x\in |\pol|$. If $x\in |\pol|\setminus W$, then $\widehat{h}(x)=\widehat{f}(\varphi(x))$, so $|\widehat{f}(\varphi(x))-\widehat{h}(x)|_m=0$. If $x\in W$, then
\begin{align*}
|\widehat{f}(\varphi(x))&-\widehat{h}(x)|_m=|\widehat{f}(\varphi(x))-(\theta_1(x)h(\xi(x))+\theta_2(x)\widehat{f}(\varphi(x)))|_m\\
&=|(1-\theta_2(x))\widehat{f}(\varphi(x))-\theta_1(x)h(\xi(x))|_m=|\theta_1(x)\widehat{f}(\varphi(x))-\theta_1(x)h(\xi(x))|_m\\
&\leq \theta_1(x)|\widehat{f}(\varphi(\xi(x)))-h(\xi(x))|_m+\theta_1(x)|\widehat{f}(\varphi(x))-\widehat{f}(\varphi(\xi(x)))|_m\\
&<\max_{x\in|\pol|}\{\theta_1(x)\}\|\widehat{f}\circ\varphi|_{|\Tt|}-h\|+\frac{\delta_1}{2}\leq\|\widehat{f}\circ\varphi|_{|\Tt|}-h\|+\frac{\delta_1}{2}\\
& <\frac{\delta_1}{2}+\frac{\delta_1}{2}=\delta_1.
\end{align*}
We conclude that $\|\widehat{f}\circ\varphi-\widehat{h}\|<\delta_1$, as desired.

Define $H:=\rho\circ \widehat{h}:|\pol|\to |\Ll|$ and observe that $H$ is surjective (because $h$ is surjective) and 
\begin{equation}\label{main1}
\|F\circ \varphi-H\|=\|\rho\circ\widehat{f}\circ\varphi-\rho\circ\widehat{h}\|<\frac{\veps}{3},
\end{equation}
because $\|\widehat{f}\circ\varphi-\widehat{h}\|<\delta_1\leq \delta_1'$. By Proposition \ref{key}, there exists $\eps>0$ with the following property: if $g:|\Tt|\to|\Ll|$ is a continuous map such that $\|h-g\|<\eps$, then $\pi_{\eps}\circ g:|\Tt|\to|\Ll|$ is surjective and 
$$
\|h-\pi_{\eps}\circ g\|<2\max_{\tau\in\sd^\ell(\Ll)}\{\diam(\tau)\}\leq2\max_{\tau\in\Ll}\{\diam(\tau)\} <\frac{\veps}{3},
$$ 
where $\pi_{\eps}$ is the $\eps$-squeezing map of $\sd^\ell(\Ll)$ (see Definition \ref{defsqueezing}). As $Q$ is compact, the maps $F$ and $H\circ\varphi^{-1}$ are uniformly continuous. Thus, there exists $\delta_2>0$ such that if $x,y\in Q$ are such that $|x-y|_n<\delta_2$, then 
\begin{equation}\label{1}
|F(x)-F(y)|_m<\frac{\veps}{3} \text{\, and \, } |H(\varphi^{-1}(x))-H(\varphi^{-1}(y)) |_m<\eps.
\end{equation}

Let $(\Qq,\id_Q)$ be a triangulation of $Q$ such that $\diam(\sigma)<\delta_2$ for each $\sigma\in\Qq$. By Paw\l{}ucki's desingularization (see Theorem \ref{pawudesing}), there exists a $\Cont^p$-triangulation $(\Qq^*,\Phi)$ of $|\Qq|=Q$ such that:
\begin{itemize}
\item $\Qq^*$ is a refinement of $\Qq$,
\item $\Phi(\sigma)=\sigma$, for each simplex $\sigma\in\Qq$,
\item $\pi_{\eps}\circ H\circ\varphi^{-1}\circ\Phi$ is of class $\Cont^p$.
\end{itemize}
The (of course not commutative) diagram in Figure \ref{notcommutative} summarizes the situation. 
\begin{figure}[ht!]
$$
\xymatrix{
 	&				&		&	  &   Y &	& 	 \\
& &  X\ar@{^{(}->}[d]\ar@{^{(}->}[dll]\ar[rr]^{\psi^{-1}f} \ar[urr]^{f}& &  |\Ll|\ar[u]^{\psi}\ar[u]^{\psi}\ar@{=}[rr]&& |\Ll|\ar[ull]_{\psi}\\
|\Qq^*|\ar[rr]^{\Phi} & & |\Qq|\ar[rr]^{\widehat{f}} \ar[urr]^{F}& & V\ar[u]_{\rho}\ar[rr]^{\rho} & & |\Ll|\ar[u]^{\pi_{\eps}} \\
& & |\pol|\ar[u]^{\varphi}\ar[rrrr]^{\widehat{h}}\ar[urrrr]^{H}& & & & V	\ar[u]^{\rho}\\
& & |\Tt|\ar@{^{(}->}	[u]\ar[uull]_{\varphi|_{|\Tt|}}\ar[rrrr]^{h} & && & |\Ll|\ar@{^{(}->}	[u] 
}
$$
\caption{\small{A diagram that summarizes the current situation.}}
\label{notcommutative}
\end{figure}
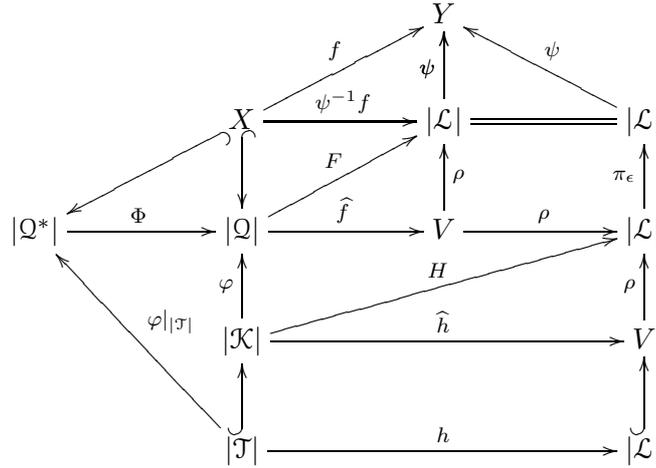

Define $g:=\pi_{\eps}\circ H\circ \varphi^{-1}\circ\Phi|_{X}:X\to |\Ll|$, which is a definable map of class $\Cont^p$. Observe first that, as $\Phi(\sigma)=\sigma$ and $\diam(\sigma)<\delta_2$ for each simplex $\sigma\in\Qq$, then $|x-\Phi(x)|_n<\delta_2$ for each $x\in|\Qq^*|=Q$. By \eqref{1}, we deduce
\begin{align*}
\|h-H\circ\varphi^{-1}\circ\Phi\circ\varphi|_{|\Tt|}\|&=\|H\circ\varphi^{-1}\circ\varphi|_{|\Tt|}-H\circ\varphi^{-1}\circ\Phi\circ\varphi|_{|\Tt|}\|\\
&=\|H\circ\varphi^{-1}|_X-H\circ\varphi^{-1}\circ\Phi|_X\|\\
&\leq\max_{x\in Q}|H(\varphi^{-1}(x))-H(\varphi^{-1}(\Phi(x)))|_m <\eps.
\end{align*}
Thus, by Proposition \ref{key},
$$
g(X)=\pi_{\eps}(H(\varphi^{-1}(\Phi(X))))=\pi_{\eps}(H(\varphi^{-1}(\Phi(\varphi(|\Tt|)))))=|\Ll|.
$$
In order to conclude, it only remains to show that the map $g$ satisfies $\|f-g\|<\veps$. First observe that, as $\pi_\eps$ is the $\eps$-squeezing map of $\sd^{\ell}(\Ll)$, then $\pi_{\eps}(\tau)=\tau$ for each simplex $\tau\in\sd^\ell(\Ll)$ (see Remark \ref{partialpi}), so
\begin{equation}\label{main2}
|x-\pi_\eps(x)|_m\leq\max_{\tau\in\sd^\ell(\Ll)}\{\diam(\tau)\}\leq\max_{\tau\in\Ll}\{\diam(\tau)\}<\frac{\veps}{3}
\end{equation}
for each $x\in|\Ll|$. As $\Phi:Q\to Q$ is a homeomorphism such that $|x-\Phi(x)|_n<\delta_2$ for each $x\in Q$, then also $|\Phi^{-1}(y)-y|_n<\delta_2$ for each $y\in Q$. In particular, by \eqref{1}, we have $\|F\circ\Phi^{-1}-F\|<\tfrac{\veps}{3}$. By \eqref{main1} and \eqref{main2}, we deduce, 
\begin{align*}
\|f-g\|&=\|F|_X-\pi_{\eps}\circ H\circ \varphi^{-1}\circ\Phi|_{X}\|\leq \|F-\pi_{\eps}\circ H\circ \varphi^{-1}\circ\Phi\|\\
&\leq \|F-H\circ \varphi^{-1}\circ\Phi\|+\|H\circ \varphi^{-1}\circ\Phi-\pi_{\eps}\circ H\circ \varphi^{-1}\circ\Phi\|\\
&<\|F\circ\Phi^{-1}\circ\Phi-H\circ\varphi^{-1}\circ\Phi\|+\frac{\veps}{3}=\|F\circ\Phi^{-1}-H\circ\varphi^{-1}\|+\frac{\veps}{3}\\
&\leq \|F\circ\Phi^{-1}-F\|+\|F-H\circ\varphi^{-1}\|+\frac{\veps}{3}\\
&< \frac{\veps}{3}+\|F\circ\varphi\circ\varphi^{-1}-H\circ\varphi^{-1}\|+\frac{\veps}{3}\\
&=\|F\circ\varphi-H\|+\frac{2\veps}{3}<\frac{\veps}{3}+\frac{2\veps}{3}=\veps,
\end{align*}
as required.
\end{proof}

\subsection*{Acknowledgments} The author would like to thank Riccardo Ghiloni for suggesting to investigate this problem and for valuable discussions during the preparation of this work.

\bibliographystyle{amsalpha}

\end{document}